\documentclass[11pt, a4paper, oneside, reqno]{amsart}

\addtolength{\voffset}{0cm} 
\addtolength{\textheight}{1cm} 
\addtolength{\hoffset}{-2cm}
\addtolength{\textwidth}{4cm}

\setlength{\parskip}{1mm}
\linespread{1.2}



\makeatletter
\def\@settitle{\begin{center}%
		\baselineskip14\p@\relax
		\normalfont\LARGE\scshape\bfseries
		\@title
	\end{center}%
}
\makeatother

\makeatletter

\def\subsection{\@startsection{subsection}{2}%
	\z@{.5\linespacing\@plus.7\linespacing}{.5\linespacing}%
	{\normalfont\large\bfseries}}

\def\subsubsection{\@startsection{subsubsection}{3}%
	\z@{.5\linespacing\@plus.7\linespacing}{.5\linespacing}%
	{\normalfont\itshape}}

\usepackage[colorlinks	= true,
			raiselinks	= true,
			linkcolor	= MidnightBlue,
			citecolor	= Mahogany,
			urlcolor	= ForestGreen,
			pdfauthor	= {Chris van der Ploeg},
			pdftitle	= {},
			pdfkeywords	= {},   
			pdfsubject	= {},
			plainpages	= false]{hyperref}
\usepackage[usenames, dvipsnames]{color}
\usepackage{dsfont,amssymb,amsmath,subfig, graphicx,fancyhdr,mdframed}
\usepackage{amsfonts,dsfont,mathtools, mathrsfs,amsthm,wrapfig} 
\usepackage[]{siunitx}
\usepackage{ragged2e}
\captionsetup[subfigure]{margin=0pt, parskip=0pt, hangindent=0pt, indention=0pt, 
labelformat=parens, labelfont=rm}

\usepackage{enumitem}

\allowdisplaybreaks
\usepackage{algorithm}
\usepackage[noend]{algpseudocode}

\allowdisplaybreaks
\date{\today}

\patchcmd\@setauthors
  {\MakeUppercase{\authors}}
  {\authors}
  {}{}

\newtheorem{Thm}{Theorem}[section]
\newtheorem{Prop}[Thm]{Proposition}
\newtheorem{Fact}[Thm]{Fact}
\newtheorem{Lem}[Thm]{Lemma}
\newtheorem{Cor}[Thm]{Corollary}
\newtheorem{As}[Thm]{Assumption}

\newtheorem*{pprb}{Problem statement}

\newtheorem{Def}[Thm]{Definition}

\theoremstyle{remark}
\newtheorem{Rem}[Thm]{Remark}

\theoremstyle{remark}

\theoremstyle{definition}
\newtheorem*{nnot}{Notation}


\newcommand{\R}{\mathbb{R}}

\newcommand{\N}{\mathbb{N}}

\newcommand{\ra}{\rightarrow}

\newcommand{\ind}{\mathds{1}}
\newcommand{\Let}{\coloneqq}

\newcommand{\tr}{^\intercal}


\newcommand{\C}{\mathbb{C}}
\newcommand{\B}{\mathbb{B}}
\newcommand{\Y}{\mathbb{Y}}



\newcommand{\vark}[2][]{V_n^{#1}\left[#2\right]}

\newcommand{\ave}[2][]{\mu_n^{#1}\left[#2\right](k)}
\newcommand{\avek}[2][]{\mu_n^{#1}\left[#2\right]}
\newcommand{\EZo}[1][]{E(z #1)}
\newcommand{\EZ}[1][]{e}
\newcommand{\fEZ}[1][]{e}
\newcommand{\fa}{f_{{\rm a}}}
\newcommand{\fm}{f_{{\rm m}}}
\newcommand{\hfa}{\widehat{f}_{\rm a}}
\newcommand{\hfm}{\widehat{f}_{\rm m}}

\newcommand{\phact}[1]{\phi_n[#1](k)}

\newcommand{\phactk}[1]{\phi_n[#1]}
\newcommand{\phacttk}[1]{\phi_n^{\intercal}[#1]}
\newcommand{\Phactk}[1]{\Phi_n[#1]}
\newcommand{\Phactpsk}[1]{\phi_n^\dagger[#1]}
\newcommand{\Phactpstk}[1]{\phi_n^{\dagger\intercal}[#1]}

\newcommand{\widebar}[1]{\overline{#1}}
\newcommand{\vecsig}[1]{\mathbf{#1}_n(k)}
\newcommand{\vecsigk}[1]{\mathbf{#1}_n}
\newcommand{\aven}[3][]{\mu_{#3}^{#1}\left[#2\right](k)}
\newcommand{\avenk}[3][]{\mu_{#3}^{#1}\left[#2\right]}
\newcommand{\varn}[3][]{V_{#3}^{#1}\left[#2\right](k)}
\newcommand{\varnk}[3][]{V_{#3}^{#1}\left[#2\right]}
\newcommand{\constant}[1]{\mathcal{C}_{#1}}
\newcommand{\shiftop}{\mathfrak{q}}
\newcommand{\step}[1]{\mathfrak{U}_{#1}}
\newcommand{\tf}[1]{\mathcal{#1}}

\graphicspath{{images/}}

\title[]{Multiple Faults Estimation in Dynamical Systems: Tractable Design and Performance Bounds}

\author[C.J. van der Ploeg]{Chris van der Ploeg}
\author[M. Alirezaei]{Mohsen Alirezaei}
\author[N. van de Wouw]{Nathan van de Wouw}
\author[P. {Mohajerin Esfahani}]{Peyman {Mohajerin Esfahani}}

\thanks{
The authors are with the Department of Mechanical Engineering, Eindhoven University of Technology,
($\{${\tt C.J.v.d.Ploeg, M.Alirezaei, N.v.d.Wouw@tue.nl$\}$@tue.nl}), and the Delft Center for Systems and Control, Delft University of Technology ({\tt P.MohajerinEsfahani@tudelft.nl}).} %
\thanks{Peyman Mohajerin Esfahani acknowledges the
support of the ERC grant TRUST-949796.}

\begin{document}
	\maketitle
	\begin{abstract}   
	    We propose a tractable nonlinear fault estimation filter along with explicit performance bounds for a class of linear dynamical systems in the presence of both additive and non-linear multiplicative faults. We consider the case where both faults may occur simultaneously and through an identical dynamical relationship, a setting that is relevant to several application domains including automotive driving, aviation, and chemical plants. The proposed filter architecture combines tools from model-based approaches in the control literature and regression techniques from machine learning. To this end, we view the regression operator through a \mbox{system-theoretic} perspective to develop operator bounds that are then utilized to derive performance bounds for the proposed estimation filter. In the case of constant, simultaneously, and identically acting additive and multiplicative faults, it can be shown that the estimation error converges to zero with an exponential rate. The performance of the proposed estimation filter in the presence of incipient faults is validated through an application on the lateral safety systems of SAE level 4 automated vehicles. The numerical results show that the theoretical bounds of this study are indeed close to the actual estimation error.
	
	\end{abstract}

\newlength\figH
\newlength\figW
\setlength{\figH}{7cm}
\setlength{\figW}{12cm}	
\section{Introduction} \label{sec:introduction}

In the detection task, the objective is to detect the presence of a fault in real-time while being insensitive to natural disturbances~\cite{beard1971failure} and/or model uncertainty~\cite{7428855} to prevent false positives. Considering the occurrence of multiple faults at the same time, we typically refer to isolation as the task to identify which one of the faults occurs. A classical approach toward isolation is to treat the problem as a special case of detection in which all the possible faults are viewed as natural disturbances.
This methodology is found in a great variety of model settings, e.g., from single non-linear systems~\cite{DU20141066} to multi-agent, possibly large-scale,   systems~\cite{Shames20112757,KELIRIS2015408,Boem20194}. A more integral approach for detection and isolation of faults is an unknown-input type estimator, which decouples the effect of unknown state measurements and disturbances (or faults) from the~\textit{residual} through an algebraic approach~\cite{2006Nyberg, Esfahani2016} or approaches using the generalized inverse~\cite{Ansari201911, Zhan20182146}.

A follow-up step to fault detection and isolation is fault-tolerance (or fault-resilience) control in which the objective is to counteract and mitigate the faults in real-time. To this end, {\em estimation} of the exact value of the fault signal is a vital aspect. When assuming a linear or linearized system description, additive faults can be estimated using standard system-theoretic tools~\cite{GERTLER1997653}. If, however, the fault is multiplicative, the fault estimation is a more challenging task due to the nonlinear impact of the fault. A possible approach to deal with multiplicative dynamics is to borrow tools from the machine learning literature (e.g., regression~\cite{HOFLING19961361}), or by reformulating multiplicative faults as additive faults~\cite{LAN201648}. The combined {\em estimation} problem of both additive and multiplicative faults, acting on the same system, can be considered a form of simultaneous state and parameter estimation. This problem is relevant in a broad range of application domains (e.g., automotive as illustrated later in this work, aviation~\cite{LIU20193849} and chemical plants~\cite{TAO201933}) where actuators/sensors, which can inhibit simultaneously a bias (i.e., an additive fault) or loss-of-effectiveness (i.e., a multiplicative fault)~\cite{8370129}, are used in safety-critical applications. This problem has been considered in several different settings, an example of which is the simultaneous appearance of multiplicative input faults and additive output faults~\cite{Chen20202107}, i.e., the faults are assumed to appear linearly independent. Other works consider additive and multiplicative faults acting through the same dynamical relationship (i.e., linearly dependent)~\cite{8370129}. The problem is however largely unexplored when both additive and multiplicative fault types act simultaneously in the system while assuming this linear dependence between the faults. 

The central problem, defined and solved in this manuscript, is to {\em{estimate}} the fault signals (rather than only acknowledging/detecting their {\em presence}) in {\em{real-time}} when both additive and multiplicative faults are present and act {\em simultaneously} through {\em{identical}} dynamical relationships. Due to the dynamical inseparability of the additive and multiplicative faults, the estimates of such faults will, by definition, be affected by one another. It is therefore vital to determine an explicit performance bound that quantifies these effects. In this light, the following problem is the main focus of this study.
\begin{pprb}\label{prob:1}
   Consider a linear dynamical system with the available measurement signal~$z$ and the multivariate signal~$f=[\fa,\fm]$ comprising possibly both additive ($\fa$) and multiplicative ($\fm$) faults that are not dynamically separable. We aim to design an estimation filter that turns the signal~$z$ to an estimation signal~$\widehat{f}=[\widehat{f}_a,\widehat{f}_m]$ (i.e., a causal dynamic mapping $z \mapsto \widehat{f}$)  such that the additive and multiplicative faults can be estimated separately, where the combined estimation error $\| f - \widehat{f}\|_2$ is bounded by
    \begin{align} 
    \label{perf}
    \| f(k) - \widehat{f}(k)\|_2 \le \constant{}(C_{z}, C_{f},k-k_0),
    \end{align}
    where the constant~$\constant{}$ is an explicit bound depending on the dynamical model, the parameters~$C_{z}$ and $C_{f}$ representing characteristics of the measurement $z$ and fault signals $f$, and the time difference~$k - k_0$ in which $k_0$ denotes the discrete starting sample of the fault signal~$f$ and $k$ is the current time instance. The signals characteristics can, for instance, include the information of the average and variance of the respective signals. 
\end{pprb}
Let us emphasize that a performance bound in the form of~\eqref{perf} provides a real-time estimation error for each and every single element of the multivariate signal~$f$.

{
{\bf Our contributions:} The distinct feature of the problem above that makes it particularly challenging is the combination of three aspects: (i) real-time estimation of a multivariate fault signal, (ii) the presence of inseparable\footnote{{See Section~\ref{subsec:separability} for a more precise definition of this terminology.}} additive and multiplicative faults, and hence a (particular) form of nonlinear dynamics, (iii) explicit, rigorous performance bounds for fault detection. To our best of knowledge, no approach in the existing literature addresses all these aspects at the same time. Our proposed solution method leverages concepts from the system theory literature with regard to the aspects (i) and (iii) while using tools from the machine learning literature to deal with the aspect (ii). This combination yields an estimation filter with three components as depicted in Figure~\ref{fig:schemthm1}. More specifically, the technical contributions of this work are summarized as follows:

\begin{enumerate}[label=(\roman*)]
    \item We develop system-theoretic (error) bounds for the regression operator, a well-known scheme borrowed from the machine learning literature~(Proposition~\ref{prop:regression}). These bounds play a crucial role to quantify the performance of the proposed estimation filter.
    \item We propose a general estimation architecture as in Figure~\ref{fig:schemthm1} that comprises three intertwined components. When the component pre-filter is a simple identity operation, we develop an explicit, computable performance bound in terms of the average and variance of the fault signals~(Theorem~\ref{theorem:main:I}). In the special case of constant faults, the proposed performance bound provides insight regarding the convergence and boundedness of the estimation error~(Corollary~\ref{cor:constant faults:I}). 
    
    \item Building on the insight obtained from the performance of the static pre-filter, we propose an alternative design utilizing a dynamic pre-filter and develop the corresponding performance bound~(Theorem~\ref{theorem:main:II}). We further show that in the special case of constants faults the estimation error decays to zero exponentially fast~(Corollary~\ref{cor:constant faults:II}). 

\end{enumerate}}

Furthermore, we also develop two technical results concerning the output bounds of linear time-invariant systems with zero steady-state gain~(Lemma~\ref{lem:LTI}) and the variance of product signals (Lemma~\ref{lem:varbound}) that facilitate the proof of the main results highlighted above. While these results admittedly seem standard, we however did not find them in the literature in the present form as needed for the main results of this study. The proof of these lemmas is relegated to an extended version of this work~\cite{vanderploeg2020multiple} due to the space limitation.

The remaining part of this paper is organized as follows. Section~\ref{sec:problem} introduces a detailed problem description 
 and challenges of the research topic; furthermore, an outline of the proposed approach is given. Following the problem description, the main theoretical results of the work are given in Section~\ref{sec:results}. The theoretical results are backed by technical proofs, which are given in Section~\ref{sec:technicalproofs}. In Section~\ref{sec:casestudy}, the theoretical results are accompanied by numerical simulations, showing the contributions of the set of developed theorems in more practical daylight. 
\begin{nnot}
    The symbols $\N$ and $\R$ represent the set of integers and real numbers and the symbol $\R_+$ represents the set of non-negative real numbers. The ones column vector with the length $n$ is denoted by~$\ind_{n}:=\left[1,1,\hdots,1\right]\tr$. The $p$-norm of a vector~$v$ is denoted by $\|v\|_p$ where $p \in [1,\infty]$. Given a square matrix~$A$ with strictly real eigenvalues, we denote by~$\overline{\lambda}\in\mathbb{R}$ and $\underline{\lambda}\in\mathbb{R}$ the maximum and minimum eigenvalue values of the matrix, respectively. Given a matrix $A\in\R^{n\times m}$, its transpose is denoted by~$A^{\intercal}\in\R^{m\times n}$, the norm~$\lVert A\rVert_2 = \overline{\sigma}(A) = \sqrt{\overline{\lambda}(A^{\intercal}A)}$ is the largest singular value, and $A^\dagger \Let (A\tr A)^{-1}A\tr$ is the pseudo-inverse. Given two matrices with an equal dimension~$A, B \in \R^{m\times n}$, the operator~$A\circ B \in \R^{m\times n}$ denotes the element-wise (also known as Hadamard) product of two matrices. The operators $\avek{x}$ and $\vark{x}$ map $\R$-valued discrete-time signals to $\R$-valued discrete-time signals, and are defined as the first moment $\ave{x}\Let\frac{1}{n}\sum_{i=0}^{n-1}x(k-i)$ and the centered second moment  $\varn[2]{x}{n}\Let\frac{1}{n}\sum_{i=0}^{n-1}x^2(k-i)-\ave[2]{x}$ of the signal $x$ over the last $n$ time instants. Throughout this study we reserve the bold sub-scripted by~$n$ $\mathbf{x}_n$ as the concatenated version of the signal~$x$ over the last~$n$~time instants: $\vecsig{x} \Let \begin{bmatrix}x(k),x(k-1),\hdots,x(k-n+1)\end{bmatrix}^{\intercal}$. The symbol $\shiftop$ represents the shift operator, i.e., $\shiftop[x(k)]=x(k+1)$. 
 \end{nnot}
\section{Problem description and outline of the proposed Approach} \label{sec:problem}
In this section, a formal description of the generic model class along with the basic principles of existing FDI schemes is given. Using this class of models, the high-level problem can be formulated. We further elaborate on the challenges and shortcomings of the current literature. Finally, an outline of the proposed solution is provided, addressing the challenges in the preceding parts.

\subsection{Model description and problem statement}
Throughout this study we consider dynamical systems described via a discrete-time non-linear differential-algebraic equation (DAE) of the form,
\begin{align}\label{eq:DAE}
	H(\shiftop)[x]+L(\shiftop)[z]+F(\shiftop)\big[\fa + \EZo\fm\big]=0,
\end{align}
where $x, z, \fa, \fm$ represent discrete-time signals, indexed by the the counter $k$ (e.g., $x(k)$), taking values in $\R^{n_x}, \R^{n_z}, \R^{n_f}$, respectively.The mapping~$E:\R^{n_z}\ra\R^{n_E}$ is a static algebraic mapping capturing the nonlinearity of the fault dynamics, which is assumed known and, depending on the application, can be obtained through first-principle modeling (see Section~\ref{sec:casestudy}). The dependency on the signal $z$ of the mapping $E$ can be extended to other unknown signals $x$ through the use of additional state estimators. Let $n_r$ represent the number of rows in~\eqref{eq:DAE}, and the matrices $H(\shiftop), L(\shiftop), F(\shiftop)$ are polynomial functions with $n_r$ rows and $n_x,\:n_z,\:n_f$ columns in the variable $\shiftop$, which represents the shift operator. As such, these matrices may be cast as linear operators in the space of discrete-time signals. 
The signal $x$ contains all unknown signals in the DAE system, typically comprising the internal states and unknown exogenous disturbances. The signal $z$ is composed of all known signals including the control inputs $u$ and the output measurements $y$. The signal $\fa$ represents an additive fault while the signal $\fm$ is considered to be a multiplicative fault or intrusion which interacts non-linearly with the signal $E(z)$. The overall contribution of both fault signals can then be seen in the term~$\fa + \EZo\fm$, to which we may refer as the ``{\em aggregated fault signal}" hereafter. Note that, for the sake of generality in this work, the location of the faults $\fa$ and $\fm$ is not restricted to any particular location and hence could represent amongst others the notions of, e.g., sensor faults or actuator faults as widely adopted in the FDI literature.

The modeling framework~\eqref{eq:DAE} encompasses a large class of dynamical systems. A motivating example to show its level of generality is the set of nonlinear ordinary difference equations (ODE) described by
	\begin{align}\label{eq:ldif}
	\begin{cases}
		GX(k+1) = AX(k)+B_uu(k)+B_dd(k)+B_{f}\Big(\fa(k)+E_X\big(B_XX(k),u(k)\big)\fm(k)\Big),\\
		y(k) = CX(k)+D_uu(k)+D_dd(k)+D_{f}\Big(\fa(k)+E_Y\big(B_YX(k),u(k)\big)\fm(k)\Big),
	\end{cases}
	\end{align}
where $u$ is the input signal, $d$ the unknown exogenous disturbance, $X$ the internal state of the system, $Y$ the measurable output, $\fa$ the additively acting set of faults or intrusions and finally $\fm$ the set of faults acting as a multiplication on a non-linear combination of the internal states and input. The matrices~$G$, $A$, $B_u$, $B_d$, $B_{f}$, $B_X$, $B_Y$, $C$, $D_u$, $D_d$ and $D_{f}$ are constant matrices with appropriate dimensions. The following fact provides a simple-to-check condition under which the ODE model~\eqref{eq:ldif} falls into the category of our nonlinear DAE model~\eqref{eq:DAE}.

\begin{Fact}[From ODE to DAE]\label{fact:odedae}
    Consider the ODE~\eqref{eq:ldif} and suppose there exist matrices~$K_X, K_Y$ such that
	\begin{align}\label{eq:Fact:matrix}
	\begin{cases}
		B_X = K_XC, \quad  K_XD_{f}=0, \quad K_X D_d = 0, \\
		B_Y = K_YC, \quad K_YD_f=0, \quad K_YD_{d}=0.
	\end{cases}
	\end{align}
	Then, the ODE model can be viewed as a DAE model~\eqref{eq:DAE} by introducing 
	\begin{align*}
	x &= \begin{bmatrix}X\\d\end{bmatrix}, \quad z = \begin{bmatrix}y\\u\end{bmatrix}, \quad \EZo = \begin{bmatrix}E_X(K_X(Y-D_u u),u)\\E_Y(K_Y(Y-D_u u),u)\end{bmatrix}, \\
	H(\shiftop) &= \begin{bmatrix}-\shiftop G+A&B_d\\C&D_d\end{bmatrix}, \quad L(\shiftop) = \begin{bmatrix}0&B_u\\-I&D_u\end{bmatrix}, \quad  F(\shiftop) = \begin{bmatrix}B_{f}\\D_{f}\end{bmatrix}.
\end{align*}
\end{Fact}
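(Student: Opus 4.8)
The plan is to substitute the proposed block definitions of $x$, $z$, $E(z)$, $H(\shiftop)$, $L(\shiftop)$ and $F(\shiftop)$ directly into the DAE~\eqref{eq:DAE} and to verify, one stacked block-row at a time, that the two resulting equations reproduce the state equation and the output equation of the ODE~\eqref{eq:ldif}. First I would expand the two rows. Using $\shiftop[X(k)] = X(k+1)$ together with the constancy of $G$ (so that $\shiftop G = G\shiftop$), the top block of $H(\shiftop)[x]$ contributes $-GX(k+1) + AX(k) + B_d d(k)$, the top block of $L(\shiftop)[z]$ contributes $B_u u(k)$, and the top block of $F(\shiftop)[\,\fa + E(z)\fm\,]$ contributes $B_f\big(\fa(k) + E_X(\cdot)\fm(k)\big)$; summing these to zero and moving $GX(k+1)$ to the other side yields exactly the first line of~\eqref{eq:ldif}. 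The bottom block produces $CX(k) + D_d d(k) - y(k) + D_u u(k) + D_f\big(\fa(k) + E_Y(\cdot)\fm(k)\big) = 0$, which rearranges to the output equation.

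The only non-mechanical step, and the one where the hypotheses~\eqref{eq:Fact:matrix} are genuinely used, is to show that the nonlinear argument built purely from the measured signals $z = (y,u)$ reproduces the state-dependent arguments $B_X X$ and $B_Y X$ appearing in the ODE. To see this I would start from the output equation itself and rearrange it to $y - D_u u = CX + D_d d + D_f(\fa + E_Y\fm)$. Left-multiplying by $K_X$ and invoking $K_X D_d = 0$ and $K_X D_f = 0$ annihilates both the disturbance and the aggregated-fault contributions, leaving $K_X(y - D_u u) = K_X C X = B_X X$ by $B_X = K_X C$. The identical computation with $K_Y$, using $K_Y D_d = 0$, $K_Y D_f = 0$ and $B_Y = K_Y C$, gives $K_Y(y - D_u u) = B_Y X$. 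Consequently $E_X\big(K_X(y - D_u u), u\big) = E_X(B_X X, u)$ and $E_Y\big(K_Y(y - D_u u), u\big) = E_Y(B_Y X, u)$, so the measurement-based nonlinearity $E(z)$ used in the DAE coincides with the state-based nonlinearity of the ODE.

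Combining the two block-row expansions with this substitution identity shows that the DAE~\eqref{eq:DAE} under the stated assignments is term-for-term equal to the ODE~\eqref{eq:ldif}, which establishes the claim. I expect the conceptual crux to be exactly the elimination step above: it is precisely the orthogonality conditions $K_X D_d = K_X D_f = 0$ (and their $K_Y$ analogues) that remove the unknown disturbance $d$ and the fault terms from $K_X(y-D_u u)$, making the otherwise state-dependent nonlinearity expressible through the known output $y$ alone. The one piece of delicate bookkeeping, by contrast, is matching the aggregated fault term $F(\shiftop)[\fa + E(z)\fm]$ block-wise: the stacked column structure of $E(z)$ must be aligned conformably with the partition $F = \begin{bmatrix}B_f\\ D_f\end{bmatrix}$ so that $B_f$ pairs with the $E_X$-nonlinearity and $D_f$ with the $E_Y$-nonlinearity; once this conformal alignment is fixed, the verification is routine.
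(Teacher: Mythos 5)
Your verification is correct, and it supplies exactly the argument the paper leaves implicit (the paper states Fact~\ref{fact:odedae} without proof): the block-row substitution is routine, and the essential content is indeed that the conditions $K_XD_d=K_XD_f=0$, $B_X=K_XC$ (and their $K_Y$ analogues) let you recover $B_XX$ and $B_YX$ from the measured signal $y-D_uu$ by annihilating the disturbance and fault terms in the output equation. Your closing remark on aligning the stacked $E(z)$ conformably with $F=\bigl[\begin{smallmatrix}B_f\\ D_f\end{smallmatrix}\bigr]$ is also the right reading of the paper's (slightly abusive) notation $F(\shiftop)[\fa+E(z)\fm]$.
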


Note that from a computational point of view checking the existence condition in~\eqref{eq:Fact:matrix} is a linear programming (LP) problem, which can be certified highly efficiently. 

Throughout this study the following assumption holds, which serves as a necessary and sufficient condition for the detectability of the aggregated fault signal~$\fa + E(z) \fm$ in~\eqref{eq:DAE}.
\begin{As}[Detectability]\label{assu:detectability}
The polynomial matrices $H(\shiftop)$ and $F(\shiftop)$ in~\eqref{eq:DAE} satisfy the rank condition
	$\text{Rank}\,\big\{\big[H(\shiftop), F(\shiftop)\big]\big\} > \text{Rank}\,\big\{H(\shiftop)\big\}.$
For simplicity of the exposition, we further assume that $F(\shiftop)$ is a polynomial column vector, i.e., $n_{\fa} = n_{\fm}=1$. 
\end{As}

Assumption~\ref{assu:detectability} paves the way to acknowledge whether the aggregated fault signal is nonzero. However, differentiating the exact contribution between additive fault~$\fa$ and the multiplicative fault introduces nontrivial challenges that we shall discuss in the next section.

\subsection{Current approach and open challenges}\label{subsec:separability}

A first step towards the problem of fault isolation is fault detection. To this end, a classical approach is to characterize the {\em behavioral} set of healthy trajectories. Namely, in absence of the fault signals $\fa, \fm$, all possible $z$-trajectories of the system can be characterized as
\begin{align}\label{eq:beh1}
	\mathcal{M}\Let\big\{z:\N\ra\R^{n_z} ~|~ \exists x:\N\ra\R^{n_x}:\quad H(\shiftop)[x]+L(\shiftop)[z]=0\big\},
\end{align}
which is called the healthy behavior of the system. The goal of the detection process is essentially to determine whether or not the current observed trajectory of~$z$ belongs to $\mathcal M$. Formally speaking, the detection task is to design a proper system whose input is the known signal $z$ and the respective output, also referred to as the filter {\em residual}, is zero when $z\in\mathcal{M}$ while it is nonzero when the aggregated fault signal~$\fa + \EZo\fm$ is nonzero. 
In~\cite{2006Nyberg}, a linear time invariant (LTI) system, also known as a residual generator, is proposed via the use of an irreducible polynomial basis for the nullspace of $H(\shiftop)$, denoted by $N_H(\shiftop)$. It is shown that such a polynomial fully characterizes the system behavior~\eqref{eq:beh1} in the sense of
\begin{align}\label{eq:beh2}
	\mathcal{M}=&\big\{z:\N\ra\R^{n_z}~|~N_H(\shiftop)L(\shiftop)[z]=0\big\}.
\end{align}
{In order to design a residual generator for the system~\eqref{eq:DAE}, fulfilling the desired conditions of fault detection, it suffices to introduce a linear filter polynomial $N(\shiftop)$ which can be characterized through the following polynomial arguments:
\begin{subequations}\label{eq:contot}
	\begin{align}
		N(\shiftop)H(\shiftop)=&0,\label{eq:con1}\\
		N(\shiftop)F(\shiftop)\neq& 0.\label{eq:con2}
	\end{align}
\end{subequations}
The first condition~\eqref{eq:con1} is concerned with the rejection of the natural disturbances and the unknown states, while the second condition~\eqref{eq:con2} ensures a non-zero response of the residual generator when the fault is non-zero.
In the light of Assumption~\ref{assu:detectability}, we restrict our attention to a proper LTI {estimation} filter of the form:
\begin{align}
	r\Let a^{-1}(\shiftop)N(\shiftop)L(\shiftop)[z],\label{eq:filtertf}
\end{align}
where the polynomial row vector $N(\shiftop)$ fulfills the requirements~\eqref{eq:contot}, and the stable transfer function $a^{-1}(\shiftop)$ is intended to make the residual generator proper (i.e., the degree of $a(\shiftop)$ is not less than the degree of $N(\shiftop)L(\shiftop)$ and stable (i.e., all the zeros of the polynomials $a(\shiftop)$ reside inside in the unit circle).
Following the definition of the residual~\eqref{eq:filtertf} and the DAE model~\eqref{eq:DAE}, it holds that the mapping from the signals~$\fa, \fm$ to the residual $r$ can be described by
\begin{align}\label{eq:faulttf}
    r = \tf{T}\big[E(z)\fm + \fa\big], \quad \text{where} \quad \tf{T} \Let -\frac{N(\shiftop)F(\shiftop)}{a(\shiftop)}.
\end{align}
A typical approach to isolate multiple faults~($\fa$, $\fm$) from one another is to introduce all the faults but one as natural disturbances encoded in the signal~$d$. However, this technique clearly fails for the DAE systems of the form~\eqref{eq:DAE} since Assumption~\ref{assu:detectability} does no longer hold in that case. In fact, by virtue of~\eqref{eq:faulttf}, one can see that the residual $r$ is linearly dependent on both fault signals~$\fa, \fm$. Due to this linear dependency, the residual can only be sensitive to the aggregated fault signal~$\fa+\EZo\fm$ and it is not possible to isolate this combination by means of linear filters, {an important scenario which we define in this work as {\em dynamical inseparability}}. This is the central fault isolation challenge studied in this work.}

\subsection{Outline of the proposed approach}
As mentioned in the preceding section, the key challenge of fault isolation is to estimate the additive fault $\fa$ and multiplicative fault $\fm$ when their impact on the dynamics (i.e., the corresponding dynamic matrix $F(\shiftop)$) are linearly dependent. In this study, we aim to address this challenge by leveraging tools from the regression theory, a well-known concept from the Machine learning literature~\cite{VanGestel20045}. However, to integrate those tools in a dynamical system setting and  provide rigorous performance guarantees, it is required to view these tools from a system-theoretic perspective and treat them as a dynamical system. This is the main part of the focus in this~study.  

More specifically, our proposed ``{\em {estimation} filter}" comprises three blocks, see Figure~\ref{fig:schemthm1}. The first block is called ``{\em fault detection}" and its role is to estimate the aggregated signal~$\fa+\EZo\fm$. This is essentially adopted from the current literature on fault detection with a slight extension that the residual signal~$r$ is expected to estimate
the behavior of~$\fa+\EZo\fm$ (rather than only acknowledging the existence of a fault). We call the second block ``{\em fault isolation}" that aims at isolating and estimating the contribution of the additive fault signal~$\fa$ and the multiplicative one~$\fm$. This block is essentially a (nonlinear) regression operator that also receives an additional signal~$e$, a required regressor signal containing the information of $\EZo$. As we will discuss in detail later, the dynamics of the system (and as such the dynamics of $\EZo$) has a nontrivial impact on the performance of the fault isolation block. This effect motivates the inclusion of the third block, to which we refer as the ``{\em pre-filter}". With regards to the pre-filter, we consider two cases in which one is a trivial identity (i.e., $e = \EZo$), and the second case is a linear transfer function with the input~$\EZo$, aiming to compensate for the dynamical behavior between the true aggregated signal and the residual.

\begin{figure}[t!]
        \centering
            \includegraphics[trim={0 21.6cm 0 4cm}]{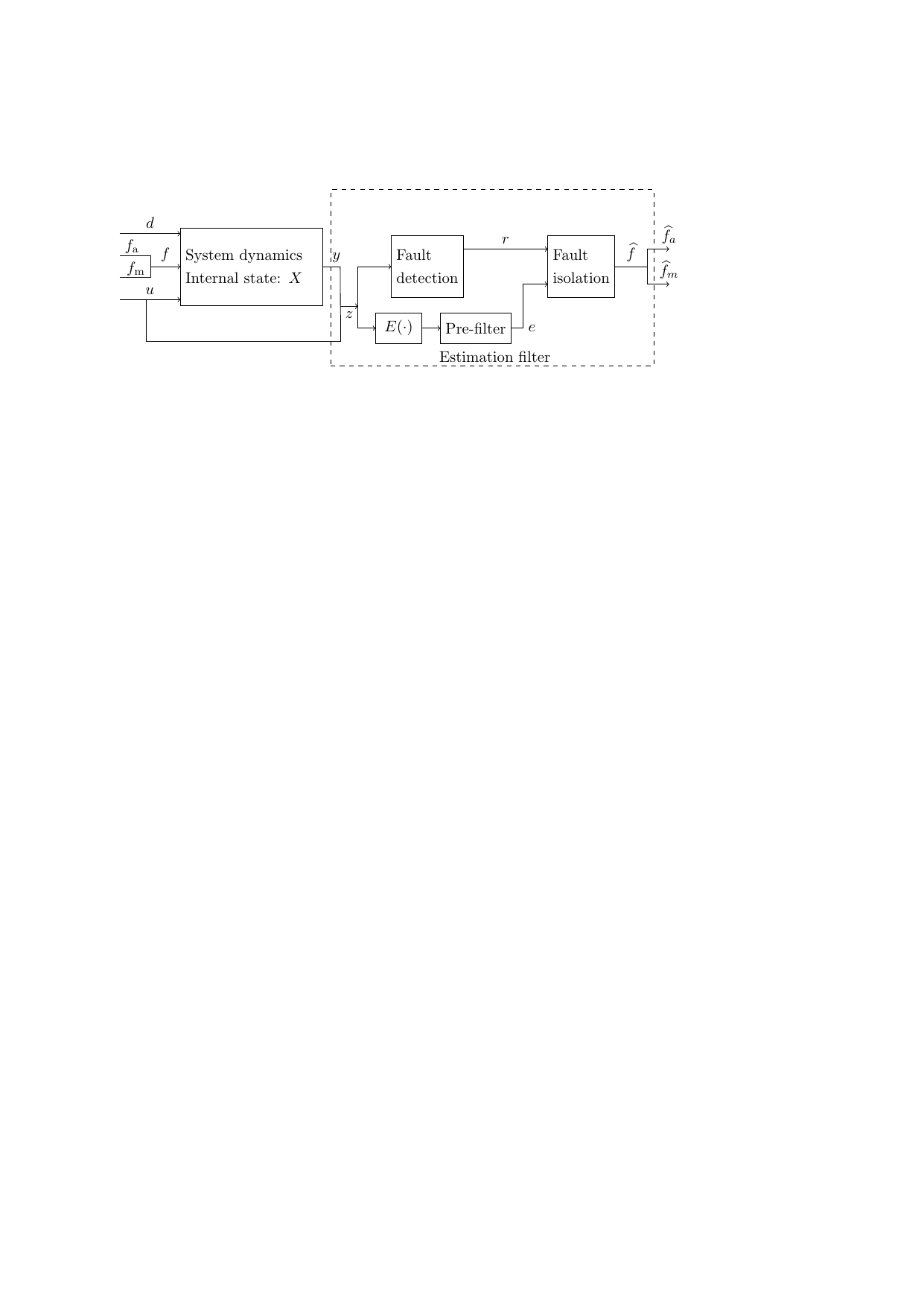}
        \caption{Block diagram of the proposed diagnosis filter.}
        \label{fig:schemthm1}
\end{figure}

\section{Estimation Filter Design: Main Results}\label{sec:results}

As sketched in Figure~\ref{fig:schemthm1}, the proposed estimation filter in this study comprises three blocks (i) fault detection, (ii) fault isolation, and (iii) pre-filter, which will be elaborated on in detail in this section. Here we only discuss the main results and their implications, and we will present the technical preliminaries and proofs in section~\ref{sec:technicalproofs}.

\subsection{Fault detection: linear residual generators}
The following lemma is a slight specialization of~\cite[Lemma~4.2]{Esfahani2016} that characterizes the class of linear residual generators with a desired asymptotic behavior. In this refined lemma, a steady-state condition on the residual is introduced. This serves as the basis for the detection block whose main objective is to detect and track (i.e., estimate) the aggregated fault signal~$\fa + \EZo\fm$. 

\begin{Lem}[LP characterization of fault detection]\label{lem:synthesis}
   Consider a polynomial row vector $N(\shiftop) = \sum_{i=0}^{d_N}N_i \shiftop^i$, and the system~\eqref{eq:DAE} with the model polynomial matrices
    \begin{alignat*}{2}
        H(\shiftop) = & \sum_{i=0}^{d_H}H_i \shiftop^i,\quad&  F(\shiftop)= \sum_{i=0}^{d_F}F_i \shiftop^i,\quad &  a(\shiftop) = \sum_{i=0}^{d_a}a_i\shiftop^{i},
    \end{alignat*}
    where $d_H,\:d_F,\:d_N,\:d_a$ denote the degree of matrices $H(\shiftop),\:F(\shiftop),\:N(\shiftop),\:a(\shiftop)$, respectively. Let us define the constant matrices
    \begin{align*}
        \widebar{N}\Let&\begin{bmatrix}N_{0}& N_{1}& \hdots&N_{d_N}\end{bmatrix}, \quad 
        &\widebar{a}\Let&\begin{bmatrix}a_0&a_1&\hdots&a_{d_a}\end{bmatrix}
        \\
        \widebar{H}\Let&
        \begin{bmatrix}
            H_{0}& H_1&\hdots&H_{d_H}&0&\hdots&0\\
            0&H_{0}&H_1&\hdots&H_{d_H}&0&\vdots\\
            \vdots&&\ddots&\ddots&&\ddots&0\\
            0&\hdots&0&H_{0}&H_1&\hdots&H_{d_H}\end{bmatrix},\quad
        &\widebar{F}\Let& \begin{bmatrix}
            F_{0}& F_1&\hdots&F_{d_F}&0&\hdots&0\\
            0&F_{0}&F_1&\hdots&F_{d_F}&0&\vdots\\
            \vdots&&\ddots&\ddots&&\ddots&0\\
            0&\hdots&0&F_{0}&F_1&\hdots&F_{d_F}\end{bmatrix}.
    \end{align*}
    Under Assumption~\ref{assu:detectability}, the linear program
    \begin{align}\label{eq:recastlemma}
        \begin{cases}
            \widebar{N}\,\widebar{H} = 0,\\
            \widebar{N}\, \widebar{F}\ind_{{d_N}\times{d_F}} = - \widebar{a}\ind_{{d_a}},
        \end{cases}
    \end{align}
    is feasible and any solution~$\widebar{N}$ is an admissible fault detector filter with a zero-steady state error from the aggregated fault to the residual , i.e., for any constant fault signals~$(\fa,\fm)$ and filter initial conditions, the residual~\eqref{eq:faulttf} fulfills $\lim_{t\to\infty}\fa+\EZo[(t)]\fm - r(t) = 0$. 
\end{Lem}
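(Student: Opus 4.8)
The plan is to first recast the two matrix equations in~\eqref{eq:recastlemma} as statements about the underlying polynomials, then to establish feasibility from the detectability assumption, and finally to read off the zero steady-state error from a DC-gain computation of the fault-to-residual transfer function~$\tf T$.

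First I would verify the \emph{polynomial dictionary} behind~\eqref{eq:recastlemma}. The block-Toeplitz structure of $\widebar H$ (respectively $\widebar F$) is exactly the matrix realizing the coefficient convolution of the product $N(\shiftop)H(\shiftop)$ (respectively $N(\shiftop)F(\shiftop)$): the block entries of $\widebar N\,\widebar H$ are the coefficients of $\shiftop^0,\shiftop^1,\dots$ in $N(\shiftop)H(\shiftop)$. Hence $\widebar N\,\widebar H = 0$ is equivalent to the disturbance-rejection identity $N(\shiftop)H(\shiftop)=0$ of~\eqref{eq:con1}. Likewise $\widebar N\,\widebar F$ is the coefficient vector of the scalar polynomial $N(\shiftop)F(\shiftop)$ (scalar, since $F$ is a column vector under Assumption~\ref{assu:detectability}), so right-multiplying by the ones vector sums these coefficients and returns the evaluation $N(1)F(1)$; similarly $\widebar a\,\ind_{d_a}=a(1)$. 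Thus the second equation in~\eqref{eq:recastlemma} is precisely the normalization $N(1)F(1)=-a(1)$, i.e.\ the unit-DC-gain condition $\tf T(1)=-N(1)F(1)/a(1)=1$.

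For feasibility I would invoke the rank condition of Assumption~\ref{assu:detectability}. The strict inequality $\operatorname{Rank}[H,F]>\operatorname{Rank}H$ over the field of rational functions guarantees a polynomial row vector $N$ in the left nullspace of $H(\shiftop)$ with $N(\shiftop)F(\shiftop)\not\equiv 0$, which is the existence statement underlying~\cite[Lemma~4.2]{Esfahani2016}. The left nullspace of $\widebar H$ is therefore nontrivial, and the linear functional $\widebar N\mapsto\widebar N\,\widebar F\,\ind_{d_N\times d_F}$ is not identically zero on it. Since its target $-a(1)$ is a single scalar, picking a null vector $N_0$ with $N_0(1)F(1)=c\neq0$ and rescaling it by $-a(1)/c$ meets the normalization while preserving $\widebar N\,\widebar H=0$ (the homogeneous equation is scale-invariant), giving a feasible point of~\eqref{eq:recastlemma}. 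The delicate point is that the rank condition only delivers $N(\shiftop)F(\shiftop)\not\equiv 0$, whereas the normalization requires the strictly stronger $N(1)F(1)\neq 0$ (stability of $a$ forces $a(1)\neq 0$); I expect securing a null vector whose product does not vanish at $\shiftop=1$ — equivalently, ruling out that every admissible $N$ annihilates the aggregated fault at DC — to be the main obstacle and the place where the rank hypothesis must be used most carefully.

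Finally, for the zero steady-state error I would use the representation $r=\tf T[\fa+\EZo\fm]$ from~\eqref{eq:faulttf} together with $\tf T(1)=1$. Writing the aggregated fault as $g(k)=\fa+\EZo[(k)]\fm$, the estimation error $g-r=g-\tf T[g]$ is the response of the LTI operator with transfer function $\big(a(\shiftop)+N(\shiftop)F(\shiftop)\big)/a(\shiftop)$, which is stable (its poles are zeros of the stable $a$) and has zero DC gain, since its numerator equals $a(1)+N(1)F(1)=0$ at $\shiftop=1$ and is hence divisible by $(\shiftop-1)$. For constant faults $(\fa,\fm)$ the aggregated signal $g$ converges to a constant (the measurement $z$, and thus $\EZo$, settling to steady state), so applying the zero-steady-state-gain estimate of Lemma~\ref{lem:LTI} to this stable filter yields $\lim_{t\to\infty}\big(g(t)-r(t)\big)=0$, which is the claimed $\lim_{t\to\infty}\fa+\EZo[(t)]\fm-r(t)=0$. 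Independence from the filter initial conditions follows because their zero-input contribution decays through the stable poles of $\tf T$.
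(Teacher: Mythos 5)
The paper gives no proof of this lemma (it defers to a ``straightforward adaptation'' of \cite[Lemma 4.6]{8846711}), so there is nothing to compare line by line; your skeleton --- the coefficient-convolution dictionary identifying $\widebar N\,\widebar H=0$ with $N(\shiftop)H(\shiftop)=0$, the reading of the second constraint as $N(1)F(1)=-a(1)$ (hence $\tf T(1)=1$), and the zero-steady-state claim via the stable error filter $\bigl(a(\shiftop)+N(\shiftop)F(\shiftop)\bigr)/a(\shiftop)$ having a root of its numerator at $\shiftop=1$ --- is exactly the intended adaptation and is sound as far as it goes.

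However, the feasibility claim is left genuinely open, and you say so yourself: Assumption~\ref{assu:detectability} only yields an $N$ in the left nullspace of $H(\shiftop)$ with $N(\shiftop)F(\shiftop)\not\equiv 0$, whereas the normalization needs $N(1)F(1)\neq 0$, and your rescaling argument collapses precisely when every admissible $N$ satisfies $N(1)F(1)=0$. This is not a vacuous worry: take $H(\shiftop)=[1,\ \shiftop]\tr$ and $F(\shiftop)=[1,\ 1]\tr$, so that the rank condition holds, yet the minimal left annihilator is $N_0(\shiftop)=[\shiftop,\ -1]$ with $N_0(\shiftop)F(\shiftop)=\shiftop-1$; every polynomial element of the nullspace is $p(\shiftop)N_0(\shiftop)$ and hence evaluates to zero at $\shiftop=1$, making the LP infeasible for any stable $a$ (which forces $a(1)\neq 0$). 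So either an additional hypothesis (no transmission zero of the fault-to-residual map at $\shiftop=1$, plus $d_N$ large enough) must be imported from the cited reference, or the feasibility assertion needs to be argued from a property of~\eqref{eq:DAE} beyond the bare rank condition; ``expecting'' this to work out is the one step of the lemma that is not routine. A second, more minor point: your final limit argument assumes $z(t)$, and hence $E(z(t))$, converges to a constant, which the lemma does not state (and which fails in the paper's own case study where $E(z)=u$ is sinusoidal); you should either state this as an explicit hypothesis or weaken the conclusion to the zero-DC-gain statement about the error operator.
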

The proof is omitted as it is a straightforward adaptation from~\cite[Lemma 4.6]{8846711}.

\subsection{Fault Isolation: nonlinear regression} 
{Next, we present the design of the fault isolation block.} A central object of this part is the {\em regression operator}, a well-known scheme adopted from the machine learning literature~\cite{VanGestel20045}. This operator represents the fault isolation block whose domain and range spaces are discrete-time signals with appropriate dimensions. 

\begin{Def}[Regression operator]\label{def:operator}
	Given an integer~$n$ and scalar-valued signals $e$ and $r$, we define
	\begin{align}\label{eq:mphim}
	   \Phi_n[e,r](k) \Let \phi_n^{\dagger}[e](k)\,\vecsigk{r}(k), 
	   \qquad \text{where} \qquad \phact{e}\Let \big[\vecsig{e},~\ind_n\big]\in\mathbb{R}^{n \times 2},
	\end{align}	
	with the operator~$^\dagger$ as the pseudo-inverse.\footnote{$A^\dagger \Let (A\tr A)^{-1}A\tr$.}
\end{Def}

In the context of the fault estimation scheme in Figure~\ref{fig:schemthm1}, the output~$\Phi_n[e,r](k)$ of the nonlinear regression operation in Definition~\ref{def:operator} is, in fact, equal to the fault estimate $\widehat{f}$. The nonlinear regression operator in Definition~\ref{def:operator} enjoys certain regularity properties that are key for the results we will develop later. The following proposition provides input-output bounds of the regression operator. These bounds will be utilized later to develop a performance bound for the proposed estimation filter in a real-time and dynamic operational model. 

\begin{Prop}[Regression bounds]\label{prop:regression}
    Consider the regressor operator in Definition~\ref{def:operator}. For all discrete-time scalar-valued signals~$r, e$ and $y = [y^{(1)}, y^{(2)}]\tr$, at each time instant $k \in \N$ where $\varnk[]{e}{n}\neq0$ it holds that
    \begin{subequations}\label{eq:lem:bounds}
    \begin{align}
        \big\|\Phi_n[e,y^{(1)}+e\,y^{(2)}]-\mu_n[y]\big\|_2  & \leq \frac{\constant{n}(\vecsigk{e})}{\vark{e}}\Big(\vark{y^{(1)}}+\vark{y^{(2)}}\,\|\vecsigk{e}\|_\infty\Big),\label{eq:boundlem1}\\
        \big\|\Phi_n[e,r]\big\|_2  & \leq \frac{\constant{n}(\vecsigk{e})}{\sqrt{n}\vark{e}}\|\vecsigk{r}\|_2, \label{eq:boundlem2}
    \end{align}
    \end{subequations}
    where the constant is defined as~$\constant{n}(\vecsigk{e}) \Let \sqrt{\varnk[2]{e}{n} + \avek[2]{e} + 1}$. 
\end{Prop}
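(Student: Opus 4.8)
The plan is to first reduce the whole proposition to a single spectral estimate on the $2\times2$ Gram matrix, and to derive \eqref{eq:boundlem1} as a corollary of \eqref{eq:boundlem2}. Writing $\phi_n[e] = \big[\vecsigk{e},\,\ind_n\big]\in\R^{n\times2}$, I would compute $\phi_n[e]^\intercal\phi_n[e] = n\,M$ with $M = \left[\begin{smallmatrix}\frac1n\|\vecsigk{e}\|_2^2 & \avek{e}\\[2pt] \avek{e} & 1\end{smallmatrix}\right]$. The determinant is the variance $\det M = \varnk[2]{e}{n}$ (raw second moment minus squared mean) and the trace is $\operatorname{tr}M = \varnk[2]{e}{n}+\avek[2]{e}+1 = \constant{n}(\vecsigk{e})^2$. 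In particular $M\succ0$, equivalently $\phi_n[e]$ has full column rank, exactly when $\vark{e}\neq0$ (recall $\vark{e}=\sqrt{\varnk[2]{e}{n}}$), which is the standing hypothesis and is what makes the pseudo-inverse well defined.

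Bound \eqref{eq:boundlem2} is the heart of the argument. By submultiplicativity, $\big\|\Phi_n[e,r]\big\|_2 \le \big\|\phi_n[e]^\dagger\big\|_2\,\|\vecsigk{r}\|_2$, and $\big\|\phi_n[e]^\dagger\big\|_2 = \lambda_{\min}\big(\phi_n[e]^\intercal\phi_n[e]\big)^{-1/2}$. For the positive definite $2\times2$ matrix $M$ I would use $\lambda_{\min}(M) = \det M/\lambda_{\max}(M)\ge \det M/\operatorname{tr}M = \varnk[2]{e}{n}/\constant{n}(\vecsigk{e})^2$, the single genuine inequality being $\lambda_{\max}(M)\le\operatorname{tr}M$. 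Hence $\lambda_{\min}\big(\phi_n[e]^\intercal\phi_n[e]\big) = n\,\lambda_{\min}(M)\ge n\,\varnk[2]{e}{n}/\constant{n}(\vecsigk{e})^2$, so $\big\|\phi_n[e]^\dagger\big\|_2\le \constant{n}(\vecsigk{e})/(\sqrt n\,\vark{e})$, which is exactly \eqref{eq:boundlem2}.

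Bound \eqref{eq:boundlem1} then follows by exploiting that $r\mapsto\Phi_n[e,r]$ is linear (since $\vecsigk{r}$ is linear in $r$). Writing $\mathbf d^{(j)} \Let \vecsigk{y^{(j)}} - \avek{y^{(j)}}\,\ind_n$ for the window-centered concatenations, so that $\tfrac1n\|\mathbf d^{(j)}\|_2^2 = \varnk[2]{y^{(j)}}{n}$, the concatenation $\vecsigk{r}$ of $r=y^{(1)}+e\,y^{(2)}$ splits as $\big(\avek{y^{(1)}}\ind_n + \avek{y^{(2)}}\vecsigk{e}\big) + \big(\mathbf d^{(1)} + \vecsigk{e}\circ\mathbf d^{(2)}\big)$. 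The first bracket equals $\phi_n[e]\,[\avek{y^{(2)}},\,\avek{y^{(1)}}]^\intercal$ and hence lies in the range of $\phi_n[e]$; since $\phi_n[e]^\dagger\phi_n[e]=I_2$ under full column rank, the operator reproduces it exactly, returning the stacked window-means $\mu_n[y]$. Applying \eqref{eq:boundlem2} to the remaining centered signal, together with the triangle inequality and the Hölder-type estimate $\|\vecsigk{e}\circ\mathbf d^{(2)}\|_2\le\|\vecsigk{e}\|_\infty\|\mathbf d^{(2)}\|_2$, and finally $\tfrac1{\sqrt n}\|\mathbf d^{(j)}\|_2 = \vark{y^{(j)}}$, recovers precisely the right-hand side of \eqref{eq:boundlem1}.

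The main obstacle is the spectral estimate behind \eqref{eq:boundlem2}: everything hinges on lower-bounding $\lambda_{\min}\big(\phi_n[e]^\intercal\phi_n[e]\big)$, and it is the clean ratio $\det M/\operatorname{tr}M$ that reproduces the stated constant $\constant{n}(\vecsigk{e}) = \sqrt{\varnk[2]{e}{n}+\avek[2]{e}+1}$; a cruder bound on $\lambda_{\max}(M)$ would loosen the constant. The only other delicate point is that both the invertibility of the Gram matrix and the exact-reproduction identity $\phi_n[e]^\dagger\phi_n[e]=I_2$ rely on full column rank, which is equivalent to $\vark{e}\neq0$ — exactly the hypothesis under which the proposition is stated. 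The remaining steps (submultiplicativity, triangle and Hölder inequalities, and the variance identities) are routine.
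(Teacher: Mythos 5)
Your proof is correct and takes essentially the same route as the paper's: the same $2\times 2$ Gram-matrix spectral estimate (your $\lambda_{\min}(M)\ge \det M/\operatorname{tr}M$ is algebraically identical to the paper's explicit eigenvalue formula followed by $\sqrt{a^2-b}\le a$), and the same mean-centering decomposition in which the mean component is reproduced exactly by the pseudo-inverse and the centered remainder is handled by the triangle and H\"older inequalities.
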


\begin{proof}
    The proof is provided in Section~\ref{subsec:pf-reg}.
\end{proof}

We emphasize that the bounds in~\eqref{eq:lem:bounds} hold for each time instant $k \in \N$, but to avoid clutter we drop the time-dependency of the signals (e.g., $\Phi_n[e,r]$ instead of $\Phi_n[e,r](k)$). We also note that the parameter~$\mathcal C$ only depends on the signal~$e$ (more precisely, on the last $n$ time instants of the signal~$e$ denoted by~$\vecsigk{e}$). In this view, the inequality~\eqref{eq:boundlem2} indeed represents an operator norm for the linear mapping $r \mapsto \Phi_n[e,r]$. Let us elaborate further on how the bounds as in~\eqref{eq:lem:bounds} are the first stepping-stones towards our main goal in this study. Measuring the ``aggregated" signal~$y^{(1)}+e\,y^{(2)}$, one can utilize the bound in~\eqref{eq:boundlem1} to bound the error on the  estimation of the average of the multivariate signal~$y = [y^{(1)}, y^{(2)}]\tr$ (i.e., $\mu_n[y]$) via the regression operator. It is worthwhile to note that when the signal~$y$ is constant, then $y = \mu_n[y]$ and $\vark{y^{(1)}} = \vark{y^{(2)}} = 0$, and that the estimation error reduces to zero provided that $\vark{e} \neq 0$. The second result~\eqref{eq:boundlem2} allows us to bound the output of the nonlinear regression operator given a bounded input, which can be viewed as a means to bound estimated faults given the dynamically filtered residual $r$ as an input. The bounds~\eqref{eq:lem:bounds} in fact offer a rigorous framework to treat the isolation block as a nonlinear dynamical system whose induced gain, and as such the boundedness of its output, is determined by~$\vark{e}$, the variance of signal~$e$ over a horizon with the length~$n$. 

\subsection{Pre-filter: dynamic compensator} 
In this section, we focus on the pre-filter block in Figure~\ref{fig:schemthm1}. Before presenting the main results of this paper, we first need to proceed with a basic preparatory lemma on the output bound of LTI systems. To improve the flow of the paper, we skip the technical proofs of the results in this section and defer them to Section~\ref{subsec:pf-thm}. 

\begin{Lem}[Zero steady-state LTI output bound]\label{lem:LTI}
    Consider a proper LTI system with the numerator~$ b(\shiftop) = \sum_{i=0}^{d} b_i \shiftop^i$, and the denominator~$a(\shiftop) = \prod_{i = 1}^{d} (\shiftop - p_i)$ where the poles are distinct and the dominant one  (i.e., the one closest to the unit circle) is $|p| < 1$. Suppose the steady-state gain of the filter is $0$ (i.e., $b(1) = 0$), the internal state (in the Jordan canonical form) is initiated at $X(0)$, and the input signal~$u(t)$ is $0$ until time $k_0$ and takes possibly nonzero values for $t \ge k_0$. Then, the output signal~$y(t)$ satisfies the bound  
    \begin{align*}
        \|\vecsigk{y}\|_2 \le \constant{0}\lVert X(0)\rVert_2|p|^{k{-n}} + \constant{1} \lVert\avenk[]{u}{k-k_0}\rVert_2|p|^{k{-n}-k_0} + \constant{2}{\sqrt{k-k_0}}\varnk[]{u}{k-k_0},
    \end{align*}
    where the constants~$\constant{0}, \constant{1}, \constant{2}$ are defined as
    \begin{align*}
        r_i = \frac{b(-p_i)}{\prod_{j \neq i}(p_j - p_i)}, \quad  
        \constant{0} = \sqrt{n \sum_{i=1}^{d} r_i^2}, \quad 
        \constant{1} = \frac{\,\sqrt{n\,d\sum_{i=1}^{d} r_i^2}}{1 - |p|}, \qquad \constant{2} = |b_d| + \sum_{i=1}^d \frac{|r_i|}{1 - |p_i|}\,. 
    \end{align*}
    \end{Lem}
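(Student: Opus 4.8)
The plan is to pass to the modal (partial-fraction) realisation of the filter and then split the response into three pieces, each producing exactly one term of the stated bound. Since the poles $p_1,\dots,p_d$ are distinct and $\deg b\le d=\deg a$, partial-fraction expansion gives
\[
\frac{b(\shiftop)}{a(\shiftop)} = b_d + \sum_{i=1}^{d}\frac{r_i}{\shiftop-p_i},
\]
with the residues $r_i$ as defined in the statement. The associated diagonal (Jordan) realisation $X_i\mapsto p_iX_i+u$, $y=\sum_i r_iX_i+b_du$ has impulse response $h(0)=b_d$ and $h(m)=\sum_i r_ip_i^{m-1}$ for $m\ge1$, so by linearity $y(t)=y^{\mathrm{IC}}(t)+(h*u)(t)$, where $y^{\mathrm{IC}}(t)=\sum_i r_ip_i^{t}X_i(0)$ is the zero-input response. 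I would then split the input over the active window $[k_0{+}1,k]$ into its empirical mean $\bar u\Let\avenk[]{u}{k-k_0}$ and the (windowed) zero-mean fluctuation $\tilde u\Let u-\bar u$, yielding the three-way decomposition $y=y^{\mathrm{IC}}+y^{\mathrm{c}}+y^{\mathrm{fl}}$, where $y^{\mathrm{c}}(t)=\bar u\sum_{m=0}^{t-k_0}h(m)$ is the constant-input response and $y^{\mathrm{fl}}=h*\tilde u$.

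For the initial-condition term, Cauchy--Schwarz together with $|p_i|\le|p|$ gives $|y^{\mathrm{IC}}(t)|\le\sqrt{\sum_i r_i^2}\,|p|^{t}\lVert X(0)\rVert_2$; summing squares over the length-$n$ window and using $|p|<1$ produces $\lVert\vecsigk{y}^{\mathrm{IC}}\rVert_2\le\constant{0}\lVert X(0)\rVert_2|p|^{k-n}$. The constant-input term is where $b(1)=0$ enters decisively: because $\sum_{m\ge0}h(m)=b(1)/a(1)=0$, the partial sum $\sum_{m=0}^{t-k_0}h(m)$ equals minus the geometric tail $\sum_{m>t-k_0}h(m)$, whose modulus is at most $\tfrac{|p|^{t-k_0}}{1-|p|}\sum_i|r_i|\le\tfrac{|p|^{t-k_0}}{1-|p|}\sqrt{d\sum_i r_i^2}$. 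Thus $y^{\mathrm{c}}$ is a geometrically decaying transient rather than an $\order(1)$ steady state, and summing over the window yields the second term $\constant{1}\lVert\avenk[]{u}{k-k_0}\rVert_2|p|^{k-n-k_0}$.

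For the fluctuation term I would invoke Young's convolution inequality, $\lVert h*\tilde u\rVert_2\le\lVert h\rVert_1\lVert\tilde u\rVert_2$, using $\lVert h\rVert_1\le|b_d|+\sum_i|r_i|/(1-|p_i|)=\constant{2}$ and, by definition of the empirical variance, $\lVert\tilde u\rVert_2=\sqrt{k-k_0}\,\varnk[]{u}{k-k_0}$; restricting the $\ell_2$-norm from the whole horizon to the window only decreases it, giving the third term. A triangle inequality over the window then assembles the three estimates into the claim. The main obstacle is the constant-input term: one must exploit $b(1)=0$ to convert the nominally non-decaying response of a persistent step input into a geometrically small transient, and then track the window offsets $k-n$ and $k-k_0$ so that the exponents and the constants $\constant{0},\constant{1},\constant{2}$ come out exactly as stated. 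By contrast, the initial-condition and fluctuation bounds are routine once the modal realisation is in hand, the only delicate point being the off-by-one conventions hidden in the moment operators $\mu$ and $V$.
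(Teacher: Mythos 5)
Your proposal is correct and follows essentially the same route as the paper: the same three-way split into initial-condition response, mean-input response, and fluctuation response, with the zero-DC-gain condition $b(1)=0$ used in the same way to turn the step response into a geometric tail (the paper does this in state space via $\sum_{i=0}^{\Delta k-1}A^i-(I-A)^{-1}=-A^{\Delta k}(I-A)^{-1}$, you do it on the impulse-response partial sums). The only cosmetic difference is in the fluctuation term, where you invoke Young's inequality with $\lVert h\rVert_1$ while the paper bounds the $\mathcal{H}_\infty$ norm frequency-wise; both yield exactly the constant $\constant{2}$.
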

        
\begin{proof}
The proof is provided in Section~\ref{subsec:pf-reg}.
\end{proof}
The statement of Lemma~\ref{lem:LTI} is rather classical and is not unexpected. However, we need such an assertion with explicit computable bounds for the main results of this study, which to our best knowledge does not exist in this form in the literature. 

We further propose two possible {designs} for the pre-filter, each of which comes along with certain pros and cons. The first, and simplest, design option is the static identity block. The next theorem presents a performance bound for this static pre-filter design.

\begin{Thm}[Performance bound: (I) static pre-filter]\label{theorem:main:I}
    Consider the system~\eqref{eq:DAE} and the fault estimation filter in Figure~\ref{fig:schemthm1} where the fault detection block is characterized by the linear program~\eqref{eq:recastlemma} and a denominator~$a(\shiftop)$ with distinct and real-valued poles. {The fault isolation block is the regression operator in~\eqref{eq:mphim} with the horizon $n$. Suppose the pre-filter block is identity (i.e., $e = E(z)$), and the fault signal starts at time~$k_0$.} Then, at each time instant~$k \in\N$ we have
    \begin{subequations}
	\label{eq:thm:global I}
	\begin{align}
		\label{eq:thm:perf I}
		\left\lVert\widehat{f}-\avek{f}\right\rVert_{2} \le  
		\frac{1}{\vark{e}}\Big( \alpha_0 |p|^{k  - k_0} + \alpha_1 \varnk[]{\fa}{k - k_0} + \alpha_2 \varnk[]{\fm}{k - k_0} + \alpha_3\Big), 
	\end{align}
	where the constant~$p\in\mathbb{R}$ is the dominant pole of the denominator~$a(\shiftop)$ and the involved constants are defined as
		\begin{align}
	    \alpha_0 & = 
		    \constant{1}\frac{\constant{n}(\vecsigk{e})}{\sqrt{n}} \Big(\lvert\avenk[]{\fa}{k-k_0}\rvert + \lvert\avenk[]{e\fm}{k-k_0}\rvert\Big),\label{eq:thm:perf I:1} \\
		    \alpha_1 & = \constant{2} \constant{n}(\vecsigk{e}){\sqrt{\frac{k-k_0}{n}}},\\
		    \alpha_2 & = \constant{2}\constant{n}(\vecsigk{e}){\sqrt{\frac{k-k_0}{n}}}\left({\sqrt{k-k_0}}\varnk[]{\EZ}{k-k_0} + \lvert\avenk[]{\EZ}{k-k_0}\rvert\right),\\
		    \alpha_3 & = \constant{n}(\vecsigk{e}) \Big( \vark{\fa} + \vark{\fm} \|\vecsigk{e}\|_\infty + \constant{2}{\sqrt{\frac{k-k_0}{n}}} \lvert\avenk[]{\fm}{k-k_0}\rvert\varnk[]{\EZ}{k-k_0}\Big).
		\end{align}
    \end{subequations}
    in which $\constant{n}(\vecsigk{e})$ is defined in Proposition~\ref{prop:regression} and the constants $\constant{1}, \constant{2}$ are defined in Lemma~\ref{lem:LTI}.
\end{Thm}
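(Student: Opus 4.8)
The plan is to split the estimation error into a \emph{static regression} error and a \emph{dynamic filtering} error, bound each with one of the two estimates in Proposition~\ref{prop:regression}, and then feed the LTI output bound of Lemma~\ref{lem:LTI} together with the product-variance bound of Lemma~\ref{lem:varbound} into the dynamic part. First I would recall from~\eqref{eq:faulttf} that the residual is $r = \tf{T}[g]$ with the aggregated fault $g \Let \fa + \EZo\fm$, where $\tf{T} = -N(\shiftop)F(\shiftop)/a(\shiftop)$ is proper and, by the zero-steady-state property established in Lemma~\ref{lem:synthesis}, has \emph{unit} steady-state gain. Since the pre-filter is the identity, $e = \EZo$, and since $\Phi_n[e,\cdot]$ is linear in its second argument (immediate from Definition~\ref{def:operator}, as $\Phi_n[e,r] = \phi_n^{\dagger}[e]\,\vecsigk{r}$), the error decomposes as
\[
    \widehat{f} - \mu_n[f] = \big(\Phi_n[e,g] - \mu_n[f]\big) + \Phi_n[e,\,r-g],
\]
the first summand being the regression approximation error and the second the dynamic filtering error.

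For the first summand I would apply~\eqref{eq:boundlem1} with $y = f$, $y^{(1)} = \fa$ and $y^{(2)} = \fm$, so that the aggregated argument is exactly $g = y^{(1)} + e\,y^{(2)}$; this yields the bound $\tfrac{\constant{n}(\vecsigk{e})}{V_n[e]}\big(V_n[\fa] + V_n[\fm]\,\|\vecsigk{e}\|_\infty\big)$, which supplies the first two contributions to $\alpha_3$. For the second summand I would invoke the operator-norm estimate~\eqref{eq:boundlem2} applied to the signal $w \Let r - g$, giving $\|\Phi_n[e,w]\|_2 \le \tfrac{\constant{n}(\vecsigk{e})}{\sqrt{n}\,V_n[e]}\,\|\vecsigk{w}\|_2$, which reduces the remaining task to bounding $\|\vecsigk{w}\|_2$.

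The key observation is that $w = r - g = (\tf{T} - I)[g]$ is the output of the LTI system $\tf{T} - I$, which shares the denominator $a(\shiftop)$ (distinct, real poles, dominant pole $p$) and, by the unit steady-state gain of $\tf{T}$, has \emph{zero} steady-state gain. As $g$ vanishes before $k_0$ and the filter is at rest there, the system starts from zero initial state at $k_0$, so Lemma~\ref{lem:LTI} applies with input $g$ and no initial-condition contribution, producing a bound of the form $\constant{1}\,|\mu_{k-k_0}[g]|\,|p|^{k-k_0} + \constant{2}\sqrt{k-k_0}\,V_{k-k_0}[g]$ (with the precise exponent of $|p|$ supplied by Lemma~\ref{lem:LTI}). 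I would then separate $g = \fa + e\,\fm$: the average splits linearly, $|\mu_{k-k_0}[g]| \le |\mu_{k-k_0}[\fa]| + |\mu_{k-k_0}[e\fm]|$, which after multiplication by the regression factor $\tfrac{\constant{n}(\vecsigk{e})}{\sqrt{n}\,V_n[e]}$ reproduces $\alpha_0$; the standard deviation is subadditive, $V_{k-k_0}[g] \le V_{k-k_0}[\fa] + V_{k-k_0}[e\fm]$, and the product term is controlled by Lemma~\ref{lem:varbound} as $V_{k-k_0}[e\fm] \le V_{k-k_0}[\fm]\big(\sqrt{k-k_0}\,V_{k-k_0}[e] + |\mu_{k-k_0}[e]|\big) + |\mu_{k-k_0}[\fm]|\,V_{k-k_0}[e]$. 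Collecting the coefficients of $V_{k-k_0}[\fa]$, $V_{k-k_0}[\fm]$ and the residual cross-term then reproduces $\alpha_1$, $\alpha_2$, and the last contribution to $\alpha_3$, respectively.

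The main obstacle is conceptual rather than computational: recognizing $r - g$ as the output of a \emph{zero-steady-state} LTI filter (so that Lemma~\ref{lem:LTI} is applicable at all) and then applying the product-variance estimate of Lemma~\ref{lem:varbound} so that the $\fa$- and $\fm$-contributions separate with exactly the claimed coefficients. The delicate-but-routine part is the bookkeeping of the two horizon lengths $n$ and $k-k_0$ and of the geometric-decay exponent of $|p|$ as the two bounds are composed, together with the justification that the LTI initial-condition term genuinely vanishes because the system is at rest prior to the fault onset $k_0$.
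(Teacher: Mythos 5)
Your proposal is correct and follows essentially the same route as the paper: the same decomposition of the error into $\Phi_n[e,r-\delta]$ and $\Phi_n[e,\delta]-\mu_n[f]$, the same pairing of these with the two bounds of Proposition~\ref{prop:regression}, the same identification of $r-\delta$ as the output of the zero-steady-state filter $\tf{T}-\tf{I}$ handled by Lemma~\ref{lem:LTI}, and the same use of Lemma~\ref{lem:varbound} on $V_{k-k_0}[e\fm]$ to separate the $\fa$- and $\fm$-contributions. The only cosmetic difference is that the paper splits $r-\delta=\tf{G}[\fa]+\tf{G}[e\fm]$ before invoking Lemma~\ref{lem:LTI} on each summand, whereas you apply the lemma to the aggregated input and then split the mean and standard deviation by subadditivity; the resulting bounds coincide.
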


\begin{proof}
    The proof is provided in Section~\ref{sec:technicalproofs}.
 \end{proof}

By virtue of Theorem~\ref{theorem:main:I}, one can inspect how different aspects of the proposed design contribute to the fault estimation error. The most critical term is $\vark{e}$ in the denominator of the right-hand side of \eqref{eq:thm:perf I}. This challenging dependency is, however, an inherent limitation of the desired isolation task. In fact, one can show that when the signal~$E(z)$ is constant (i.e., $\vark{e} \equiv 0$), separation of the two faults~$(\fa, \fm)$ is even theoretically impossible. To reinforce this statement, consider the case $\vark{e} \equiv 0$ with arbitrary faults $\fa$ and $\fm$. It can be observed that the regression operator~\eqref{eq:mphim} contains the inverse of a degenerate component $\phactk{e}^\intercal\phactk{e}$ which, by definition, does not exist in such a case. This result shows that the signal $e$, over horizon $n$, does then not span the behavior of the aggregated fault over the same horizon, a concept close to the well-known {\em persistence of excitation} phenomena for LTI systems~\cite{WILLEMS2005325}.
The term~$\alpha_0$ in~\eqref{eq:thm:perf I:1} reflects the contribution of the average behavior of fault signals. In~\eqref{eq:thm:perf I:1} it can be seen that this term diminishes exponentially fast after the start of the fault signal due to its proportionality with the exponentially decaying term containing the dominant pole $|p|$ of the stable denominator $a(\mathfrak{q})$. In this light, we can deduce that the impact of these average behaviors on the performance is in fact negligible. The terms concerning $\alpha_1$ and $\alpha_2$ in~\eqref{eq:thm:global I} are mainly influenced by the variance of the fault since the beginning of the fault. The contribution of these variances in combination with the dynamics constants $\constant{1}, \constant{2}$ from Lemma~\ref{lem:LTI} is also an inevitable factor in the estimation error, since the regression model in Definition~\ref{def:operator} assumes constant contributions of the faults $\fa$ and $\fm$ appearing through the transfer function~\eqref{eq:faulttf} in the residual $r$ over a horizon $n$. Finally, the last term involving~$\alpha_3$ is a critical and potentially persistent source of error. In particular, the variance signal~$\varnk[]{\EZ}{k-k_0}$ introduces a non-zero estimation error even in the case of constant fault signals. The next corollary highlights this effect. 

\begin{Cor}[Constant faults: part I]\label{cor:constant faults:I}
    Consider the system and the estimation filter as in Theorem~\ref{theorem:main:I}. Suppose the fault signals are constant~$f = (\bar{\fa}, \bar{f}_m)$, starting from the time~$k_0$.
    Then, for any time instant $k \ge k_0 + n$ we have 
    \begin{align}\label{eq:cor:perf I}
		\left\lVert\widehat{f}-f\right\rVert_{2} \le & 
		\frac{\constant{n}(\vecsigk{e})}{\sqrt{n}\vark{e}}\Big( \constant{1}\big( |\bar{\fa}| + |\bar{f}_m| \avenk[]{e}{k - k_0}\big) |p|^{k {-n} - k_0} + {\constant{2}\sqrt{k-k_0}}|\bar{f}_m| \varnk[]{e}{k - k_0} \Big). 
	\end{align}
	\begin{proof}
        The proof is a direct application of Theorem~\ref{theorem:main:I}. Under the assumption that the fault signals are constants after time~$k \ge k_0$, we know that $\varnk[]{\fa}{k-k_0} = \varnk[]{\fm}{k-k_0} = 0$. Moreover, assuming further that $k \ge n + k_0$, we can also conclude that $\varnk[]{\fa}{n}(k) = \varnk[]{\fm}{n}(k) = 0$. In addition, the average terms of the signal reduces to  $\avenk[]{\fa}{k-k_0} = \bar{f}_a$ and $\avenk[]{\fm}{k-k_0} = \bar{f}_m$. Substituting these quantities in the bound\eqref{eq:thm:global I} concludes~\eqref{eq:cor:perf I}.
	\end{proof}
\end{Cor}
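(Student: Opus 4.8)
The plan is to read off~\eqref{eq:cor:perf I} as a direct specialization of the general bound~\eqref{eq:thm:global I}, using that a constant fault annihilates every moment in which $\fa$ or $\fm$ enters through a variance. Fix $k \ge k_0 + n$ and identify the two averaging horizons that appear in Theorem~\ref{theorem:main:I}: the short one of length $n$ ending at $k$ (inside $\vark{\cdot}$ and $\avek{\cdot}$), spanning the samples $\{k-n+1,\dots,k\}$, and the long one of length $k-k_0$ (inside $\varnk[]{\cdot}{k-k_0}$ and $\avenk[]{\cdot}{k-k_0}$), spanning $\{k_0+1,\dots,k\}$. The hypothesis $k\ge k_0+n$ forces $k-n+1\ge k_0$, so both windows sit entirely inside the region $\{k_0,k_0+1,\dots\}$ on which $\fa\equiv\bar f_a$ and $\fm\equiv\bar f_m$.

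The core of the argument is then pure book-keeping on these moments. Constancy over each window gives $\varnk[]{\fa}{k-k_0}=\varnk[]{\fm}{k-k_0}=0$ and $\vark{\fa}=\vark{\fm}=0$, while the empirical means collapse to $\avenk[]{\fa}{k-k_0}=\bar f_a$ and $\avenk[]{\fm}{k-k_0}=\bar f_m$; in particular $\avek{f}=f$, so the left-hand side of~\eqref{eq:thm:perf I} is already the quantity $\lVert\widehat f - f\rVert_2$ claimed by the corollary. Substituting into the constants $\alpha_0,\dots,\alpha_3$, the terms $\alpha_1\varnk[]{\fa}{k-k_0}$ and $\alpha_2\varnk[]{\fm}{k-k_0}$ vanish outright, and in $\alpha_3$ the first two summands ($\vark{\fa}$ and $\vark{\fm}\lVert\vecsigk{e}\rVert_\infty$) vanish as well, leaving only the contribution proportional to $\varnk[]{\EZ}{k-k_0}$.

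The single term needing an extra line is the mixed mean $\avenk[]{e\fm}{k-k_0}$ inside $\alpha_0$: since $\fm$ is constant on the long window it factors out of the average, so $\avenk[]{e\fm}{k-k_0}=\bar f_m\,\avenk[]{e}{k-k_0}$ and hence $\lvert\avenk[]{e\fm}{k-k_0}\rvert=\lvert\bar f_m\rvert\,\lvert\avenk[]{e}{k-k_0}\rvert$, while $\lvert\avenk[]{\fm}{k-k_0}\rvert=\lvert\bar f_m\rvert$. Collecting the two surviving contributions, pulling out the common prefactor $\constant{n}(\vecsigk{e})/(\sqrt n\,\vark{e})$, and writing $\sqrt{(k-k_0)/n}=\sqrt{k-k_0}/\sqrt n$, then reproduces~\eqref{eq:cor:perf I}, with the exponentially decaying term inherited from $\alpha_0$ and the persistent term from $\alpha_3$.

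I expect no genuine difficulty, since all the analytic work already lives in Theorem~\ref{theorem:main:I}; the only point demanding care is the index alignment of the first paragraph. This is exactly where $k\ge k_0+n$ is consumed: without it the short-horizon variances $\vark{\fa},\vark{\fm}$ would straddle the jump of the fault at $k_0$ and would not vanish, and $\avek{f}$ would fail to equal $f$, so the clean collapse to~\eqref{eq:cor:perf I} would break down.
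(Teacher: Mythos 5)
Your proposal is correct and follows exactly the route of the paper's own proof: a direct substitution into the bound of Theorem~\ref{theorem:main:I}, killing every variance of $\fa$ and $\fm$ over both the short and long windows (which is where $k\ge k_0+n$ is used) and collapsing the means, including the factorization $\avenk[]{e\fm}{k-k_0}=\bar f_m\,\avenk[]{e}{k-k_0}$. The extra care you take with the index alignment and the mixed-mean term is a welcome elaboration of details the paper leaves implicit, but it is the same argument.
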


As noted above, the variance of the signal~$e$ is a persistent factor contributing to the performance bound, which is captured by the last term on the right-hand side of the inequality~\eqref{eq:cor:perf I}. This is somehow expected due to the causality effect of the system dynamics. More specifically, the residual~$r$, the output of the fault detection block, opts to follow the aggregated fault signal~$\fa + E(z)\fm$ but it relies on the dynamics $\tf{T}(\shiftop)$ (cf., \eqref{eq:faulttf}). However, when the pre-filter is set to identity (i.e., $e = E(z)$), the information of the signal is provided instantly for the isolation block ({due to the} static identity pre-filter), rendering some persistent potential error proportional to~$\varnk[]{e}{k - k_0}$. This error exists because the fault isolation block assumes a static mapping $e\mapsto r$ for the static pre-filter case, whereas this mapping is inherently dynamic due to the dynamics of the system and the fault detection block~\eqref{eq:faulttf}. This dynamical misalignment in the fault isolation block manifests itself in the estimation error, even for constant faults, as shown in~\eqref{eq:cor:perf I}.  Next, we aim to address this issue by filtering the information of the signal~$E(z)$ through the same dynamics that the residual of detection filter experiences. This {novel} viewpoint brings us to the second choice of pre-filter next. 

\begin{Thm}[Performance bound: (II) dynamic pre-filter] \label{theorem:main:II}
     Consider the system~\eqref{eq:DAE} and the fault estimation filter in Figure~\ref{fig:schemthm1} where the fault detection block is characterized by the linear program~\eqref{eq:recastlemma} and a denominator~$a(\shiftop)$ with distinct and real-valued poles. {The fault isolation block is the regression operator in~\eqref{eq:mphim} with the horizon $n$.} Suppose the pre-filter block is the linear system~$\tf{T}$ as defined in~\eqref{eq:faulttf} (i.e., $e = \tf{T}[E(z)]$) with the internal states denoted by~$X_p$. If the fault signal starts at time~$k_0$, then at each time instant~$k \in\N$ we have
    \begin{subequations}
    \label{eq:thm:global II}
        \begin{align}
        \label{eq:FIfiltpf}
		    \left\lVert\widehat{f}-\avek{f}\right\rVert_{2} \le  
		    \frac{1}{\vark{e}}\Big( \beta_0 |p|^{k- k_0} + \beta_1 \varnk[]{\fa}{k - k_0} + \beta_2 \varnk[]{\fm}{k - k_0} + \beta_3\Big),
	    \end{align}
	    where the constant~$p\in\mathbb{R}$ is the dominant pole of the denominator~$a(\shiftop)$ and the involved constants are defined as
	    \begin{align}
	        \beta_0 & = \frac{\constant{n}(\vecsigk{e})}{\sqrt{n}}  \Big(\constant{1}\left(\lvert\avenk[]{\fa}{n}\rvert +\lvert \avenk[]{E(z)\fm }{k-k_0} - \avenk[]{E(z)}{k-k_0}\avenk[]{\fm}{n}\rvert\right) +   \constant{0}\lvert\avenk[]{\fm}{n}\rvert\lVert X_{p}(k-k_0)\rVert_2\Big), \label{beta_0}\\
	        \beta_1 & =\constant{2}\constant{n}(\vecsigk{e}){\sqrt{\frac{k-k_0}{n}}},\\
	        \beta_2 & = \constant{2}\constant{n}(\vecsigk{e}){\sqrt{\frac{k-k_0}{n}}}\left({\sqrt{k-k_0}}\varnk[]{\EZ}{k-k_0}+\lvert\avenk[]{\EZ}{k-k_0}\rvert\right), \label{beta_2}\\
	        \beta_3 & = \constant{n}(\vecsigk{e}) \Big( \vark{\fa} + \vark{\fm}\left( \|\vecsigk{e}\|_\infty + \lVert\vecsigk{e} - E(\vecsigk{z})\rVert_\infty\right) \\
	        \nonumber & \qquad \qquad \qquad + \constant{2} \sqrt{\frac{k-k_0}{n}} \big | \avenk[]{\fm}{k-k_0} - \avenk[]{\fm}{n} \big| \varnk[]{E(z)}{k-k_0} \Big).
	    \end{align}
	\end{subequations}	
	in which $\constant{n}(\vecsigk{e})$ is defined in Proposition~\ref{prop:regression}, the constants $\constant{0}, \constant{1}, \constant{2}$ are defined in Lemma~\ref{lem:LTI}, and the vector-valued signal~$E(\vecsigk{z})$ is understood as the evaluation of the function~$E(\cdot)$ on each of the elements of the vector~$\vecsigk{z}$.
\end{Thm}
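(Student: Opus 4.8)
The plan is to follow the template of Theorem~\ref{theorem:main:I} but to account for the fact that the regressor $e=\tf{T}[\EZo]$ is now itself dynamically filtered. Write $\widehat f=\Phi_n[e,r]$ with the residual $r=\tf{T}[\fa+\EZo\fm]$ from~\eqref{eq:faulttf}, and set the horizon averages $\bar{\fa}\Let\avek{\fa}$, $\bar{\fm}\Let\avek{\fm}$, so that $\avek{f}=(\bar{\fa},\bar{\fm})$. The $V_n=0$ instance of~\eqref{eq:boundlem1} applied to the constant signal $y=(\bar{\fa},\bar{\fm})$ shows that the regression reproduces affine-in-$e$ signals exactly, giving $\Phi_n[e,\bar{\fa}+e\bar{\fm}]=\avek{f}$. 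Since $r\mapsto\Phi_n[e,r]$ is linear, this yields $\widehat f-\avek{f}=\Phi_n[e,\rho]$ with the regression residual signal $\rho\Let r-\bar{\fa}-\bar{\fm}e$, and~\eqref{eq:boundlem2} reduces the whole theorem to bounding $\lVert\vecsigk{\rho}\rVert_2$: the prefactor $\constant{n}(\vecsigk{e})/(\sqrt n\,\vark{e})$ there already supplies the $1/\vark{e}$ and $\constant{n}(\vecsigk{e})/\sqrt n$ factors present in every $\beta_i$.

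Next I would split $\rho$ into a statically-handled part and a dynamics-carrying part. Using $r=\tf{T}[\fa+\EZo\fm]$, $e=\tf{T}[\EZo]$ and $\tf{T}=1+(\tf{T}-1)$,
\begin{align*}
    \rho=\underbrace{(\fa-\bar{\fa})}_{\rho_1}+\underbrace{\EZo(\fm-\bar{\fm})}_{\rho_2}+\underbrace{(\tf{T}-1)[\fa]}_{\rho_3}+\underbrace{(\tf{T}-1)\big[\EZo(\fm-\bar{\fm})\big]}_{\rho_4},
\end{align*}
and I would bound $\lVert\vecsigk{\rho}\rVert_2$ by $\sum_j\lVert\vecsigk{\rho_j}\rVert_2$. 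The two static deviations are elementary: $\lVert\vecsigk{\rho_1}\rVert_2=\sqrt n\,\vark{\fa}$, while $\lVert\vecsigk{\rho_2}\rVert_2\le\lVert E(\vecsigk{z})\rVert_\infty\sqrt n\,\vark{\fm}\le\big(\lVert\vecsigk{e}\rVert_\infty+\lVert\vecsigk{e}-E(\vecsigk{z})\rVert_\infty\big)\sqrt n\,\vark{\fm}$, the last step using $E(\vecsigk{z})=\vecsigk{e}-(\vecsigk{e}-E(\vecsigk{z}))$; these two produce exactly the first two contributions of $\beta_3$.

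The dynamic terms are controlled by Lemma~\ref{lem:LTI}, which applies because $\tf{T}$ has unit steady-state gain (the zero-steady-state property of Lemma~\ref{lem:synthesis}), so $\tf{T}-1$ has zero steady-state gain. For $\rho_3=(\tf{T}-1)[\fa]$ the input vanishes before $k_0$ and the detection filter is at rest, so Lemma~\ref{lem:LTI} directly gives an exponentially decaying transient feeding $\beta_0$ and a variance term $\constant{2}\sqrt{k-k_0}\,\varnk[]{\fa}{k-k_0}$ feeding $\beta_1$. The genuinely new difficulty is $\rho_4$: its input $\EZo(\fm-\bar{\fm})$ does \emph{not} vanish before $k_0$, where it equals $-\bar{\fm}\,\EZo$ (because $\EZo$ is always present while only the fault is off), so Lemma~\ref{lem:LTI} cannot be used verbatim. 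I would resolve this by separating the filter response into (a) the zero-initial-state response to the post-$k_0$ input $\EZo(\fm-\bar{\fm})$, whose mean is the covariance-type quantity $\avenk[]{\EZo\fm}{k-k_0}-\avenk[]{\fm}{n}\avenk[]{\EZo}{k-k_0}$ appearing in $\beta_0$ and whose variance, expanded via the product-variance bound (Lemma~\ref{lem:varbound}), splits into the $\beta_2$-factor $\sqrt{k-k_0}\,\varnk[]{\EZo}{k-k_0}+|\avenk[]{\EZo}{k-k_0}|$ and the correction $|\avenk[]{\fm}{k-k_0}-\avenk[]{\fm}{n}|\,\varnk[]{\EZo}{k-k_0}$ forming the last term of $\beta_3$; and (b) the initial-condition response carried by the running pre-filter state, which is proportional to $\bar{\fm}$ and thus contributes $\constant{0}\,|\avenk[]{\fm}{n}|\,\lVert X_p(k-k_0)\rVert_2$ to $\beta_0$.

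Finally I would assemble the pieces: multiplying each windowed bound by $\constant{n}(\vecsigk{e})/(\sqrt n\,\vark{e})$ and collecting the exponentially decaying transients into $\beta_0|p|^{k-k_0}$, the $\fa$- and $\fm$-variance terms into $\beta_1$ and $\beta_2$, and the static/mean contributions into $\beta_3$, reproduces~\eqref{eq:thm:global II}. The main obstacle is squarely the term $\rho_4$, namely the commutator between the LTI pre-filter $\tf{T}$ and multiplication by the time-varying $\fm$: one must disentangle its nonzero pre-fault excitation (captured through the pre-filter's internal state $X_p$) from its post-fault behaviour, and then combine Lemma~\ref{lem:LTI} with the product-variance bound Lemma~\ref{lem:varbound}. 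This is precisely the step absent from the static-prefilter argument of Theorem~\ref{theorem:main:I}.
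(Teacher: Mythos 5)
Your proposal is correct and follows essentially the same route as the paper's proof: the paper likewise freezes the horizon average $\overline{\fm}=\mu_n[\fm](k)$ as a constant, splits the input of the zero-steady-state filter $\tf{G}=\tf{T}-\tf{I}$ via the step function $\step{k_0}$ so that the nonzero pre-fault excitation is absorbed into the pre-filter state $X_p$, and then invokes Lemma~\ref{lem:LTI} together with the product-variance bound of Lemma~\ref{lem:varbound}. The only (immaterial) difference is bookkeeping: the paper peels off the static part through the intermediate signal $\delta=\fa+e\fm$ and uses both regression bounds of Proposition~\ref{prop:regression}, whereas you subtract $\bar{\fa}+\bar{\fm}e$ directly via the exact-reproduction identity~\eqref{eq:avgidentity} and then need only the operator-norm bound~\eqref{eq:boundlem2}.
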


\begin{proof}
    The proof is provided in Section~\ref{sec:technicalproofs}.
\end{proof}

In a comparison with Theorem~\ref{theorem:main:I}, one can see that the main difference {in the fault estimation error bound} appears in the last coefficient of the error bounds~(cf, $\alpha_3$ in \eqref{eq:thm:perf I} and $\beta_3$ in \eqref{eq:FIfiltpf}). In particular, the idea of an appropriate dynamic pre-filter allows us to shift the contribution of the variance signal~$\varnk[]{e}{k - k_0}$ to the third term related to~$\beta_2$ in~\eqref{beta_2}, which is multiplied by the variance of the multiplicative fault~$\varnk[]{\fm}{k - k_0}$. This shift has a significant impact on the performance when the fault signals are constant during the activation time (i.e., $k \ge k_0$). Before proceeding with the simplification of the result, in this case, let us note that the dynamic pre-filter does not necessarily outperform the static one proposed by Theorem~\ref{theorem:main:I} due to the difference in the term $\vark{e}$. Indeed, the filtered signal $\tf{T}[E(z)]$ may have a lower variance, which has a negative impact on the performance bounds. 

\begin{Cor}[Constant faults: part II]\label{cor:constant faults:II}
     Consider the system and the estimation filter as in Theorem~\ref{theorem:main:II}. Suppose the fault signals are constant~$(\bar{\fa}, \bar{f}_m)$ starting at time~$k_0$. Then, for any time instant $k \ge k_0 + n$ 
    \begin{align}\label{eq:cor:perf II}
        \lVert\widehat{f}-f\rVert_2 \le \frac{\constant{n}(\vecsigk{e})}{\sqrt{n}\vark{\fEZ}} \Big(\constant{1}\lvert\bar{\fa}\rvert + \constant{0}\lvert\bar{f}_m\rvert\lVert X_{p}(k_0)\rVert_2
        \Big)
        |p|^{k-k_0}.
    \end{align}
	\begin{proof}
        In parallel to Corollary~\ref{cor:constant faults:I}, the proof is a direct application of Theorem~\ref{eq:thm:global II} when the fault signals are constants after time~$k \ge n + k_0$, and as such $\varnk[]{\fa}{k-k_0} = \varnk[]{\fm}{k-k_0} = 0$,  $\varnk[]{\fa}{n}(k) = \varnk[]{\fm}{n}(k) = 0$, $\avenk[]{\fa}{k-k_0} = \bar{f}_a$ and $\avenk[]{\fm}{k-k_0} = \bar{f}_m$. Besides, we also note that the term~$\avenk[]{\fm}{k-k_0} - \avenk[]{\fm}{n} = 0$ vanishes as well. Substituting these quantities in the bound~\eqref{eq:thm:global II} concludes~\eqref{eq:cor:perf II}.
	\end{proof}
\end{Cor}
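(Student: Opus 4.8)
The plan is to obtain \eqref{eq:cor:perf II} as a direct specialization of Theorem~\ref{theorem:main:II}, in the same spirit as Corollary~\ref{cor:constant faults:I}: I would show that the constant-fault hypothesis annihilates every coefficient in \eqref{eq:thm:global II} except the homogeneous one carried by $\beta_0$. First I would record the two elementary consequences of $f\equiv(\bar{\fa},\bar{f}_m)$ on $[k_0,\infty)$. For any $k\ge k_0+n$ the entire averaging window of length $n$ sits inside the constant region, so a signal that is constant over a window has vanishing centered second moment; this gives $\varnk[]{\fa}{n}(k)=\varnk[]{\fm}{n}(k)=0$ and, running the same argument over the longer window of length $k-k_0$, also $\varnk[]{\fa}{k-k_0}=\varnk[]{\fm}{k-k_0}=0$, together with $\avenk[]{\fa}{n}=\avenk[]{\fa}{k-k_0}=\bar{\fa}$ and $\avenk[]{\fm}{n}=\avenk[]{\fm}{k-k_0}=\bar{f}_m$. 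The second consequence is $\avek{f}=f$ on this range, so the left-hand side of \eqref{eq:thm:global II} is exactly $\lVert\widehat{f}-f\rVert_2$.

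Next I would substitute these identities into the four coefficients. The terms $\beta_1\varnk[]{\fa}{k-k_0}$ and $\beta_2\varnk[]{\fm}{k-k_0}$ vanish immediately because the two fault variances are zero. For $\beta_3$ I would use $\vark{\fa}=\vark{\fm}=0$ over the horizon $n$ together with the average mismatch $\avenk[]{\fm}{k-k_0}-\avenk[]{\fm}{n}=\bar{f}_m-\bar{f}_m=0$; these three facts kill every summand of $\beta_3$, so $\beta_3=0$.

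The only coefficient demanding a little more care is $\beta_0$ in \eqref{beta_0}. Here I would exploit the constancy of the multiplicative fault to collapse the cross term: since $\fm\equiv\bar{f}_m$ on the relevant window, the empirical average factorizes as $\avenk[]{E(z)\fm}{k-k_0}=\bar{f}_m\,\avenk[]{E(z)}{k-k_0}$, whence $\avenk[]{E(z)\fm}{k-k_0}-\avenk[]{E(z)}{k-k_0}\,\avenk[]{\fm}{n}=\bar{f}_m\,\avenk[]{E(z)}{k-k_0}-\avenk[]{E(z)}{k-k_0}\,\bar{f}_m=0$. Using also $\avenk[]{\fa}{n}=\bar{\fa}$ and $\avenk[]{\fm}{n}=\bar{f}_m$, the coefficient reduces to $\beta_0=\tfrac{\constant{n}(\vecsigk{e})}{\sqrt{n}}\big(\constant{1}\lvert\bar{\fa}\rvert+\constant{0}\lvert\bar{f}_m\rvert\lVert X_p(k_0)\rVert_2\big)$, where (in the sense of Lemma~\ref{lem:LTI}) the transient term is driven by the pre-filter state at the fault-onset time $k_0$ and decays at the geometric rate $|p|^{k-k_0}$ pulled out in front. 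Assembling $\beta_1=\beta_2=\beta_3=0$ with this $\beta_0$ into \eqref{eq:FIfiltpf}, and recalling $\vark{\fEZ}=\vark{e}$, then yields \eqref{eq:cor:perf II}.

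I expect the only genuinely delicate point to be the cancellation inside $\beta_0$. One must be careful that the two empirical averages entering the mixed term are taken over \emph{different} horizons ($k-k_0$ versus $n$), and that it is precisely the factorization $\avenk[]{E(z)\fm}{k-k_0}=\bar{f}_m\,\avenk[]{E(z)}{k-k_0}$ that makes this term telescope to zero irrespective of the (generally non-constant) behaviour of $E(z)$. I would also reconcile the index of the pre-filter state, confirming that the homogeneous response in Lemma~\ref{lem:LTI} is indeed governed by $X_p(k_0)$ with decay $|p|^{k-k_0}$, matching the prefactor in \eqref{eq:cor:perf II}.
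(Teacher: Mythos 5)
Your proposal is correct and follows essentially the same route as the paper: a direct specialization of Theorem~\ref{theorem:main:II} in which the constant-fault hypothesis forces all fault variances and average mismatches to vanish, leaving only the exponentially decaying $\beta_0$ term. You are in fact slightly more explicit than the paper on the one nontrivial cancellation, namely that $\avenk[]{E(z)\fm}{k-k_0}-\avenk[]{E(z)}{k-k_0}\avenk[]{\fm}{n}=0$ via the factorization $\avenk[]{E(z)\fm}{k-k_0}=\bar{f}_m\,\avenk[]{E(z)}{k-k_0}$, which the paper's proof leaves implicit.
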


In the case of constant faults, Corollary~\ref{cor:constant faults:II} indicates that the fault estimation error goes to zero exponentially fast if the filtered signal~$e$ behaves ``nicely" (i.e., $\vark{e}$ is uniformly away from zero). In comparison with the assertion of Corollary~\ref{cor:constant faults:I}, this outcome evidently highlights the role of the dynamic pre-filter on the estimation performance.

The following remark provides insight into the computational complexity of the used fault estimation methods.

\begin{Rem}[Computational complexity]
Given the fact that the optimal fault-detection filter~\eqref{eq:filtertf} and pre-filter~\eqref{eq:faulttf} are computed offline, the computational complexity of the real-time running algorithm, for both Theorem~\ref{theorem:main:I} and ~\ref{theorem:main:II}, is governed by the fault isolation block. Due to the regression operation, as defined in Definition~\ref{def:operator}, this method will have a time computational complexity of $\mathcal{O}(4n+8)$, where $n$ represents the prediction horizon of the fault isolation filter. 
\end{Rem}

Let us close this section with a brief summary of the results. In this section, a general estimation architecture has been proposed. System theoretic bounds for the regression operator (Definition~\ref{def:operator}) and the LTI bound (Lemma~\ref{lem:LTI}) have been used for the construction of guaranteed performance bounds for two pre-filter variants within this estimation architecture. The insights gained from the first pre-filter variant (i.e., the identity block in Theorem~\ref{theorem:main:I}) and its behavior for constant faults (Corollary~\ref{cor:constant faults:I}), have been leveraged to propose a second design variant (i.e., the dynamic pre-filter in Theorem~\ref{theorem:main:II}), which has been proven to have an exponentially decaying performance bound for constant faults (Corollary~\ref{cor:constant faults:II}).

\section{Technical Preliminaries and Proofs of Main Results}\label{sec:technicalproofs}
This section presents the technical proofs of the theoretical results in Section~\ref{sec:results}. To this end, we will also provide some preliminary results to facilitate the proof of the main results. Before proceeding with the proofs of the main theorems, we need first to provide a useful additional lemma concerning the variance of the product of two signals. The results of this section will later facilitate the proof of the main theorems in the subsequent section.

\subsection{Proofs of the regression and LTI output bounds}\label{subsec:pf-reg}
\begin{proof}[Proof of Proposition~\ref{prop:regression}]
    From \eqref{eq:mphim} recall that the matrix~$\Phactpsk{e}$ is the pseudo-inverse of $\phactk{e}$, and using the spectral norm properties we have
    \begin{align}
        \lVert\Phactpsk{e}\rVert_2 & =  \overline{\sigma}(\Phactpsk{e}) = \sqrt{\overline{\lambda}\left(\Phactpstk{e}\Phactpsk{e}\right)} = \sqrt{\overline{\lambda}\left(\left(\phacttk{e}\phactk{e}\right)^{-1}\phacttk{e}\phactk{e}\left(\phacttk{e}\phactk{e}\right)^{-1}\right)},\nonumber\\
        & = \sqrt{\overline{\lambda}\left(\left(\phacttk{e}\phactk{e}\right)^{-1}\right)} = \sqrt{\left(\underline{\lambda}\left(\phacttk{e}\phactk{e}\right)\right)^{-1}},
        \label{eq:spectralnormproof33}
    \end{align}
    where $\overline{\lambda}$ and $\underline{\lambda}$ represent the maximum and minimum eigenvalues, respectively. Based on Definition~\ref{def:operator}, the matrix $\phacttk{e}\phactk{e}\in\mathbb{R}^{2\times 2}$ can be expanded as
    \begin{align*}
        \phacttk{e}\phactk{e} = \begin{bmatrix}\sum_{i=0}^{n-1}(e(k-i))^2&\sum_{i=0}^{n-1}e(k-i)\\\sum_{i=0}^{n-1}e(k-i)&n\end{bmatrix}  = n\begin{bmatrix}\vark[2]{e}+\avek[2]{e}&\avek{e}\\\avek{e}&1\end{bmatrix}.
    \end{align*}
    Following some straightforward algebraic computations, one can find the minimum eigenvalue of the above matrix, yielding the bound of~\eqref{eq:spectralnormproof33} as
    \begin{align*}
         \lVert\Phactpsk{e}\rVert_2 = \sqrt{\frac{2}{n (a - \sqrt{a^2 - b})}} = \sqrt{\frac{2(a + \sqrt{a^2 - b})}{nb}}, 
         \qquad a \Let 1+\vark[2]{e}+\avek[2]{e} = , \quad b \Let 4\vark[2]{e}.
    \end{align*}
    Using the trivial bound $\sqrt{a^2 - b} \le |a|$, we arrive at the upper bound 
    \begin{align}\label{eq:lem33reg}
        \lVert\Phactpsk{e}\rVert_2 \leq \frac{\sqrt{1+\vark[2]{e}+\avek[2]{e}}}{\sqrt{n}\,\vark[]{e}} = \frac{\constant{n}(\vecsigk{e})}{\sqrt{n}\,\vark[]{e}}.
    \end{align}
    Given the upper bound above and following Definition~\ref{def:operator}, we can rewrite the regression operator as
    \begin{align}
    & \Phactk{e,y^{(1)} + e\,y^{(2)}} =  \Phactpsk{e}\left(\vecsigk{y}^{(1)}+\vecsigk{e}\circ\vecsigk{y}^{(2)}\right) \nonumber \\
    \label{eq:proof11}
    & \qquad = \Phactpsk{e}\Big( \vecsigk{y}^{(1)} + \vecsigk{e} \circ \vecsigk{y}^{(2)} + \avek{y^{(1)}}\ind_n - \avek{y^{(1)}}\ind_n + \avek{y^{(2)}}\vecsigk{e} - \avek{y^{(2)}}\vecsigk{e} \Big).
    \end{align}
    where the operator $\circ$ is the element-wise product between the vectors with an equal dimension. 
    We note that when $\vark{e} > 0$, the matrix $\phactk{e}$ is full rank, and that its pseudo-inverse~$\Phactpsk{e}$ is a well-defined object. Moreover, by virtue of the basic definition of the regression operator in \eqref{eq:mphim}, it is not difficult to observe that 
     \begin{align}\label{eq:avgidentity}
        \Phactpsk{e}\Big(\avek{y^{(1)}}\ind_n + \avek{y^{(2)}}  \vecsigk{e}\Big) = \Big[\avek{y^{(1)}}, \avek{y^{(2)}}\Big]\tr = \mu_n\big[[y^{(1)}, y^{(2)}]\tr\big] = \avek{y}.
    \end{align}   
    The interpretation of~\eqref{eq:avgidentity} is that if the coefficients of the signal~$e$ is constant over a horizon with length~$n$ (i.e., $\avek{y^{(1)}}, \avek{y^{(2)}}$), then the regression operator at each time instant retrieves these constants providing that the signal~$e$ has nonzero variance over the same horizon~(i.e., $\vark{e} > 0$). The observation~\eqref{eq:avgidentity} allows us to simplify the relation~\eqref{eq:proof11} to
    \begin{align*}
        \Phactk{e, y^{(1)} + e\circ y^{(2)}} = \Phactpsk{e} \left( \vecsigk{y}^{(1)} -\avek{y^{(1)}}\ind_n  + \vecsigk{e} \circ \left(\vecsigk{y}^{(2)} - \avek{y^{(2)}}\ind_n\right)\right) + \avek{y}.
    \end{align*}
    Bringing the term~$\ave{y}$ to the left-hand side and using the triangle inequality leads to
    \begin{align*}
        \lVert\Phactk{e,y^{(1)}+e\,y^{(2)}} - \avek{y} \rVert_2 & \leq \lVert \Phactpsk{e}\rVert_2 \Big\| \vecsigk{y}^{(1)} -\avek{y^{(1)}}\ind_n  + \vecsigk{e} \circ \Big(\vecsigk{y}^{(2)} - \avek{y^{(2)}}\ind_n \Big) \Big\|_2\\
         & \leq \lVert \Phactpsk{e} \rVert_2 \Big(\big\| \vecsigk{y}^{(1)} -\avek{y^{(1)}}\ind_n\big\|_2  + \Big\|\vecsigk{e} \circ \left(\vecsigk{y}^{(2)} - \avek{y^{(2)}}\ind_n\right) \Big\|_2\Big)\\
        & = \lVert\Phactpsk{e}\rVert_2\left(\sqrt{n}\left(\vark{y^{(1)}}+\vark{y^{(2)}}\lVert\vecsigk{e}\rVert_\infty\right)\right).
    \end{align*}
     Substituting the upper bound~\eqref{eq:lem33reg} in the above inequality concludes the first assertion~\eqref{eq:boundlem1}. The second claim~\eqref{eq:boundlem2} is a direct consequence of the upper bound~\eqref{eq:lem33reg} as follows:
     \begin{align*}
        \big\|\Phi_n[e,r]\big\|_2  =\lVert\Phactpsk{\EZ}\vecsigk{r}\rVert_2 \le \big\|\Phactpsk{\EZ}\big\|_2\big\|\vecsigk{r}\big\|_2 \le \frac{\constant{n}(\vecsigk{e})}{\sqrt{n}\vark{e}}\|\vecsigk{r}\|_2.
    \end{align*}
    This concludes the proof of Proposition~\ref{prop:regression}.
    \end{proof}

The second part of this section is concerned with the lemma about the LTI output bounds with zero steady-state.

\begin{proof}[Proof of Lemma~\ref{lem:LTI}]
        Given a time instant $k \in \N$, we decompose the time-varying control signal to two parts 
        \begin{align} \label{eq:u-decom}
        u(k) = \big(u(k) - \overline{u}\,\delta_{k_0}(k)\big) + \overline{u}\,\delta_{k_0}(k),
        \end{align}
        where the constant is set to  $\overline{u} = \aven[]{u}{k-k_0}$ and $\delta_{k_0}(k) = 1$ if $k \ge k_0$; otherwise $=0$. That is, the second input $\overline{u}\,\delta_{k_0}$ is seen as a constant input in the interval~$t \in [k_0, k]$. Leveraging the superposition property of the linear systems, one can view the outcome signal of the system as the response to the two different inputs in~\eqref{eq:u-decom}. In the first step, we focus on the response of the constant input signal~$\overline{u}\,\delta_{k_0}$ that starts from~$k_0$.  
        
        Let $(A,B,C,D)$ be a state-space realization of the LTI system. Note that by $0$-steady state property, we know that $D = - C(I - A)^{-1}B$. Utilizing the state-space description directly, we can rewrite the output signal at each time instant $k = \Delta k + k_0$, $\Delta k \ge 0$, as
        \begin{align*}
            y(k) & = \Big(C\big(\sum_{i=0}^{\Delta k-1} A^i\big)B + D\Big)\overline{u} + C A^k X(0) = \Big(C\big(\sum_{i=0}^{\Delta k-1} A^i -(I - A)^{-1}\big)B\Big)\overline{u} + C A^k X(0), 
        \end{align*}
        where in the second equality we use the $0$ steady state property of the transfer function. Since the matrix $A$ is stable (i.e., $|{\lambda}(A)| < 1$), we have $(I - A)^{-1} = \sum_{i \ge 0} A^i$. This equality allows us to simplify the above equality to
        \begin{align}
            y(k) & = -C\big(\sum_{i=\Delta k}^{\infty} A^i\big)B\overline{u} + CA^k X(0) \nonumber \\
            \label{y(k)}
            & = -CA^{\Delta k}\big(\sum_{i=0}^{\infty} A^i\big)B\overline{u} + CA^kX(0) = -CA^{\Delta k}\big((I-A)^{-1}B\overline{u}\big) + CA^kX(0).
        \end{align}
        We note that the LTI system can be equivalently written as
        \begin{align*}
            \frac{b(\shiftop)}{a(\shiftop)} = \frac{\sum_{i=0}^d b_i \shiftop^d}{\prod_{i = 1}^{d} (\shiftop - p_i)} = b_d + \sum_{i=1}^d \frac{r_i}{\shiftop + p_i}, 
        \end{align*}
        where $r_i$ is as defined in the lemma statement. We further focus on the particular choice of Jordan state-space canonical form in which $A = {\rm diag}\{[p_1, \dots, p_d]\}$, $B = [1, \dots,1]\tr$, $C = [r_1, \dots, r_n]$, and $D = b_d$. Thanks to diagonal structure of the matrix~$A$, we have $\|A \|_2 = |p|$. Therefore, using the triangle property of norms, the equality~\eqref{y(k)} yields the bound
        \begin{align*}
            \|y(k)\|_2 & \le \|C\|_2 \|A\|_2^{\Delta k} (1-\|A\|_2)^{-1}\|B\|_2|\overline{u}| + \|C\|_2 \|A\|_2^k\|X(0)\|_2 \\
            & = \|C\|_2|p|^{\Delta k} (1 - |p|)^{-1}\|B\|_2|\overline{u}| + \|C\|_2|p|^{k}\|X(0)\|_2.
        \end{align*}
        The above relation holds for every time instant~$k \in \N$ and $\Delta k = k - k_0 \ge 0$. Therefore, we can conclude that 
        \begin{align*}
            \|\vecsigk{y}\| \le \sqrt{n}\|C\|_2 |p|^{k-k_0 - n}(1 - |p|)^{-1}\|B\|_2|\overline{u}| + \|C\|_2 |p|^{k- n}\|X(0)\|_2,
        \end{align*}
        which in case of the special choices of $B$ and $C$ implies the desired constants~$\constant{0}$ and $\constant{1}$. We now focus on the response to the second part~$\big(u(k) - \overline{u}\delta_{k_0}(k)\big)$ in the input signal~\eqref{eq:u-decom}. Note that the $\mathcal{L}_2$-norm of~$u(\cdot) - \overline{u}\,\delta_{k_0}(\cdot)$ is indeed ${\sqrt{k-k_0}}\varnk[]{u}{k-k_0}$. In this view, the constant $\constant{2}$ can be upper bounded by the $\mathcal{H}_{\infty}$-norm of the transfer function; recall the $\mathcal H_{\infty}$-norm of a transfer function is the $\mathcal{L}_2$ induced norm of the transfer function as an operator. We then have 
        \begin{align*}
            \constant{2} = \Big\|\frac{a(\shiftop)}{b(\shiftop)}\Big\|_{\mathcal H_\infty} = \sup_{\Omega\in[0,2\pi]} \Big|\frac{a({\mathrm e}^{j\Omega})}{b({\mathrm e}^{j\Omega})}\Big| \le |b_d| + \sum_{i=1}^d \sup_{\Omega\in[0,2\pi]}\frac{|r_i|}{|\mathrm{e}^{j\Omega}-p_i|}
            \le |b_d| + \sum_{i=1}^d \frac{|r_i|}{1 - |p_i|},
        \end{align*}
        where the second equality follows from a classical result in control theory~\cite[Theorem 2]{126576}, the first inequality is due to the triangle inequality, and the second inequality is obtained as the maximum gain is attained from~$\Omega \in \{0,\pi\}$.
    \end{proof}
    
    \subsection{Bounds on the variance of product signals}\label{subsec:var-lemma}
    Before proceeding with the proof of the main theorems, we need first to provide a useful additional lemma concerning the variance of the product of two signals. The results of this section will later facilitate the proofs of the main theorem in the subsequent section. 
    
    \begin{Lem}[Variance of product signals]\label{lem:varbound}
    Consider the discrete-time signals $a$ and $b$ over a time-horizon~$n$. At each time instant, we have
    \begin{subequations}
    \begin{align}
        \lvert\varnk[2]{a+b}{n}-\varnk[2]{a}{n}-\varnk[2]{b}{n}\rvert & \le 2\min\left\{\lVert \vecsigk{a}\rVert_2\varnk[]{b}{n},\lVert \vecsigk{b}\rVert_2\varnk[]{a}{n}\right\},\label{eq:lem:varbound1}\\
        \varnk[]{ab}{n} & \leq \sqrt{n}\varnk[]{a}{n}\varnk[]{b}{n}+\lvert\avenk[]{a}{n}\rvert\varnk[]{b}{n}+\lvert\avenk[]{b}{n}\rvert\varnk[]{a}{n}.\label{eq:lem:varbound2}
    \end{align}
\end{subequations}
\end{Lem}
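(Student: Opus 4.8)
The plan is to read the empirical second moment $\varnk[]{\cdot}{n}$ as a scaled Euclidean seminorm of the centered window and to push both claims through the covariance/triangle-inequality structure this induces. Throughout I rely on the two identities $\varnk[2]{a}{n} = \tfrac1n\big\|\vecsigk{a} - \avenk[]{a}{n}\ind_n\big\|_2^2$ and $\big\|\vecsigk{a} - \avenk[]{a}{n}\ind_n\big\|_2 = \sqrt{n}\,\varnk[]{a}{n}$, from which it follows that $\varnk[]{\cdot}{n}$ is subadditive (triangle inequality), positively homogeneous, i.e.\ $\varnk[]{c\,x}{n} = |c|\,\varnk[]{x}{n}$ for a constant $c$, and invariant under adding a constant signal. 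I write $\tilde a = a - \avenk[]{a}{n}$ and $\tilde b = b - \avenk[]{b}{n}$ for the centered signals, so $\varnk[]{\tilde a}{n} = \varnk[]{a}{n}$ and $\varnk[]{\tilde b}{n} = \varnk[]{b}{n}$.

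For the first inequality I would expand the variance of the sum straight from the definition; the matching terms cancel and leave the polarization identity $\varnk[2]{a+b}{n} - \varnk[2]{a}{n} - \varnk[2]{b}{n} = 2\,C_n(a,b)$, where $C_n(a,b) = \tfrac1n\sum_{i=0}^{n-1}a(k-i)b(k-i) - \avenk[]{a}{n}\avenk[]{b}{n}$ is the empirical covariance. The key step is to rewrite this covariance as a single inner product against one centered factor, $C_n(a,b) = \tfrac1n\langle \vecsigk{a},\, \vecsigk{b} - \avenk[]{b}{n}\ind_n\rangle$, and apply Cauchy--Schwarz; using $\big\|\vecsigk{b} - \avenk[]{b}{n}\ind_n\big\|_2 = \sqrt{n}\,\varnk[]{b}{n}$ gives $|C_n(a,b)| \le \tfrac{1}{\sqrt n}\,\|\vecsigk{a}\|_2\,\varnk[]{b}{n} \le \|\vecsigk{a}\|_2\,\varnk[]{b}{n}$. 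Because the left-hand side is symmetric in $a$ and $b$, I may instead center the $a$-factor to obtain the companion bound $\|\vecsigk{b}\|_2\,\varnk[]{a}{n}$, and taking the smaller of the two yields~\eqref{eq:lem:varbound1} (in fact with a spare factor $1/\sqrt n$ to spare).

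For the second inequality I would first exploit translation invariance: since $\avenk[]{a}{n}\avenk[]{b}{n}$ is a constant, $\varnk[]{ab}{n} = \varnk[]{ab - \avenk[]{a}{n}\avenk[]{b}{n}}{n}$, and the bracketed signal decomposes as $ab - \avenk[]{a}{n}\avenk[]{b}{n} = \avenk[]{a}{n}\,\tilde b + \avenk[]{b}{n}\,\tilde a + \tilde a\,\tilde b$. Applying subadditivity and homogeneity of $\varnk[]{\cdot}{n}$ then gives $\varnk[]{ab}{n} \le |\avenk[]{a}{n}|\,\varnk[]{b}{n} + |\avenk[]{b}{n}|\,\varnk[]{a}{n} + \varnk[]{\tilde a\,\tilde b}{n}$, which reduces the whole claim to controlling the single cross term $\varnk[]{\tilde a\,\tilde b}{n}$ by $\sqrt n\,\varnk[]{a}{n}\varnk[]{b}{n}$.

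The hard part will be exactly this cross term, and it is where the extra factor $\sqrt n$ in~\eqref{eq:lem:varbound2} originates. Here I would discard the mean-subtraction (which only lowers a variance), $\varnk[2]{\tilde a\,\tilde b}{n} \le \tfrac1n\sum_{i=0}^{n-1}\tilde a(k-i)^2\,\tilde b(k-i)^2$, and then invoke the elementary fact $\sum_i x_i y_i \le \big(\sum_i x_i\big)\big(\sum_i y_i\big)$ for nonnegative $x_i = \tilde a(k-i)^2$ and $y_i = \tilde b(k-i)^2$ (the diagonal is dominated by the full nonnegative double sum). This produces $\varnk[2]{\tilde a\,\tilde b}{n} \le \tfrac1n\big(\sum_i \tilde a_i^2\big)\big(\sum_i \tilde b_i^2\big) = n\,\varnk[2]{a}{n}\varnk[2]{b}{n}$, hence $\varnk[]{\tilde a\,\tilde b}{n} \le \sqrt n\,\varnk[]{a}{n}\varnk[]{b}{n}$, and combining with the two linear terms completes~\eqref{eq:lem:varbound2}.
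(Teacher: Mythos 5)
Your proof is correct and follows essentially the same route as the paper's: the same polarization identity plus Cauchy--Schwarz against one centered factor (and symmetrization) for \eqref{eq:lem:varbound1}, and the same decomposition $ab=\tilde a\tilde b+\overline{b}\,\tilde a+\overline{a}\,\tilde b+\overline{a}\,\overline{b}$ for \eqref{eq:lem:varbound2}. The only cosmetic differences are that you invoke subadditivity of the variance seminorm directly where the paper routes the triangle inequality through \eqref{eq:lem:varbound1}, and you spell out the cross-term bound $V_n[\tilde a\tilde b]\le\sqrt n\,V_n[a]\,V_n[b]$ that the paper leaves implicit.
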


\begin{proof}
    We first note that since the desired assertion holds for everything time instant and horizon~$n$, the signals~$a, b$ can be viewed as an $n$-dimensional vectors. Let us start with the observation that
    \begin{align}    
        \nonumber 
        \varnk[2]{a+b}{n} & = \frac{1}{n}\lVert \vecsigk{a}+\vecsigk{b}\rVert_2^2-\avenk[2]{a+b}{n}
        = \frac{1}{n}\left(\lVert \vecsigk{a}\rVert_2^2+\lVert \vecsigk{b}\rVert_2^2\right) +2\avenk[]{ab}{n}-\avenk[2]{a}{n}-\avenk[2]{b}{n}-2\avenk[]{a}{n}\avenk[]{b}{n}\\ 
        \label{eq:var(a+b)}
        & = \varnk[2]{a}{n}+\varnk[2]{b}{n}+2\avenk[]{ab}{n}-2\avenk[]{a}{n}\avenk[]{b}{n}
        = \varnk[2]{a}{n}+\varnk[2]{b}{n}+2\avenk[]{a\left(b-\avenk[]{b}{n}\right)}{n}.
    \end{align}
    Note, that the expression $\avenk[]{a\left(b-\avenk[]{b}{n}\right)}{n}$ is equivalent to the inner product of the last $n$ time instants of the signals~$a$ and $\left(b-\avenk[]{b}{n}\right)$ multiplied by the constant~$n$. By virtue of the Cauchy-Schwarz inequality, for any signals~$v, w$ we have~$|\avek{v w}| \le n \|\vecsigk{v}\|_2 \|\vecsigk{w}\|_2$. This bound together with the equality~\eqref{eq:var(a+b)} yields
    \begin{align*}
        \varnk[2]{a}{n}+\varnk[2]{b}{n} - 2\lVert \vecsigk{a}\rVert_2\varnk[]{b}{n} \le \varnk[2]{a+b}{n} & \leq \varnk[2]{a}{n}+\varnk[2]{b}{n}+2\lVert \vecsigk{a}\rVert_2\varnk[]{b}{n},\\
        \Longleftrightarrow \quad  
        \lvert\varnk[2]{a+b}{n}-\varnk[2]{a}{n}-\varnk[2]{b}{n}\rvert & \leq 2\lVert \vecsigk{a}\rVert_2\varnk[]{b}{n}.
    \end{align*}
    Thanks to the symmetry between $a$ and $b$ in the above implication, a possibly less conservative upper-bound can be deduced:
    \begin{align*}
        \lvert\varnk[2]{a+b}{n}-\varnk[2]{a}{n}-\varn[2]{b}{n}\rvert \leq 2\min\big\{\lVert \vecsigk{a}\rVert_2\varnk[]{b}{n},\lVert \vecsigk{b}\rVert_2\varnk[]{a}{n}\big\}.
    \end{align*}
    This concludes the assertion~\eqref{eq:lem:varbound1}. To show~\eqref{eq:lem:varbound2}, we define two constants at the given time instant~$k$ as
    \begin{align}\label{eq:a-bar}
        \overline{a} \Let \ave{a}, \qquad \overline{b} \Let \ave{b}. 
    \end{align}
    We emphasize that we view~$\overline{a}, \overline{b}$ as two constants over the entire history, and that we possibly have $\avek{a}(k') \neq \overline{a}$ if $k' < k$. In other words, the vector~$\big[\avek{a}(k-n+1),\dots,\avek{a}(k)\big]\tr$ is {\em not} the same as $\overline{a}\,\ind_n$; these two vectors only agree on the very last component. Using the definitions~\eqref{eq:a-bar}, we have
    \begin{align*}
        \varnk[2]{ab}{n} & = V^2_n \big[ \left(a - \overline{a}+\overline{a}\right)\left(b-\overline{b}+\overline{b}\right) \big] 
        = V^2_n \Big[ \left(a-\overline{a}\right)\left(b-\overline{b}\right)+\overline{b}\left(a-\overline{a}\right)+\overline{a}\left(b-\overline{b}\right)+\overline{a}\overline{b} \Big]
    \end{align*}
    Since the variance operator is invariant under a constant offset, the above relation reduces to
    \begin{align}\label{eq:lem:varbound3}
        \varnk[2]{ab}{n} =  V^2_n \Big[ \left(a-\overline{a}\right)\left(b-\overline{b}\right)+\overline{b}\left(a-\overline{a}\right)+\overline{a}\left(b-\overline{b}\right) \Big]. 
    \end{align}
    To simplify the notation, let us introduce
    \begin{align}\label{v,w}
        v \Let \left(a-\overline{a}\right)\left(b-\overline{b}\right), \qquad w \Let \overline{b}\left(a-\overline{a}\right)+\overline{a}\left(b-\overline{b}\right).
    \end{align}
    Using the above definitions and the result from~\eqref{eq:lem:varbound1}, we can derive the following bound from the equality~\eqref{eq:lem:varbound3}:
    \begin{subequations}
    \begin{align}
        \label{eq:var(m1+m2):1}
        \varnk[2]{ab}{n} & = \varnk[2]{v+w}{n} \leq \varnk[2]{v}{n}+\varnk[2]{w}{n} + 2\min\left\{\lVert \vecsigk{v}\rVert_2\varnk[]{w}{n},\lVert \vecsigk{w}\rVert_2\varnk[]{v}{n}\right\} \\
        \label{eq:var(m1+m2):2}
        & \le \varnk[2]{v}{n}+\varnk[2]{w}{n}+2\varnk[]{v}{n}\varnk[]{w}{n} = \left(\varnk[]{v}{n}+\varnk[]{w}{n}\right)^2,
    \end{align}
    \end{subequations}
    where the inequality in~\eqref{eq:var(m1+m2):1} is a direction application of~\eqref{eq:lem:varbound1}, and the inequality in~\eqref{eq:var(m1+m2):2} follows from the fact that the signal~$w$ has a zero mean and thus $\|\vecsigk{w}\|_2 = \vark{w}$. 
    Substituting back the definitions of $v$ and $w$ from~\eqref{v,w} into the inequality~\eqref{eq:var(m1+m2):2} yields
    \begin{align*}
        \varnk[2]{ab}{n} & \leq \left(\sqrt{n}\varnk[]{a}{n}\varnk[]{b}{n} + \frac{1}{\sqrt{n}}\big\|\overline{b}\left(\vecsigk{a} - \overline{a}\ind_n\right)\big\|_2 + \frac{1}{\sqrt{n}}\big\|\overline{a}\left(\vecsigk{b} - \overline{b}\,\ind_n\right)\big\|_2\right)^2 \\
        & = \left(\sqrt{n}\varnk[]{a}{n}\varnk[]{b}{n}+\frac{|\overline{b}|}{\sqrt{n}}\big\|\left(\vecsigk{a} - \overline{a}\ind_n\right)\big\|_2 + \frac{|\overline{a}|}{\sqrt{n}}\big\|\left(\vecsigk{b} - \overline{b}\,\ind_n\right)\big\|_2\right)^,
    \end{align*}
    which concludes the desired assertion~\eqref{eq:lem:varbound2} since~${\sqrt{n}}\vark{a} = \big\|\left(\vecsigk{a} - \overline{a}\ind_n\right)\big\|_2$.
\end{proof}

\subsection{Proofs of the main theorems}\label{subsec:pf-thm}

\begin{proof}[Proof of Theorem~\ref{theorem:main:I}]
    Let us first introduce the shorthand notation
    \begin{align}\label{eq:G-delta}
        \tf{G} \Let \tf{T} - \tf{I}, \qquad \delta(k) \Let \fa + e\fm(k), \qquad e = E\big(z(k)\big),
    \end{align}
    where the transfer function~$\tf{T}$ is as defined in~\eqref{eq:faulttf} and $\tf{I}$ is the identity transfer function. Notice that in this part the pre-filter is the static gain identity, and that its output signal~$e$ is indeed the measurement signal~$E(z)$~(cf. Figure~\ref{fig:schemthm1}). 
    Based on the definition of the estimated fault and the regression operator in Definition~\ref{def:operator} we have
    \begin{align*} 
        \widehat{f} = \Phi_n[e, r] = \Phi_n[e, r - \delta] + \Phi_n[e, \delta] - \avek{f} + \avek{f}, 
    \end{align*}   
    where the second equality simply follows from the linearity of the regression operator in the second argument. Let moving the term~$\avek{f}$ to the left-hand side and taking the 2-norm on both sides of the above equality. Using the triangle inequality and the regression bounds from Proposition~\ref{prop:regression}, we arrive at 
    \begin{align}
        \|\widehat{f}-\avek{f}\|_2 & \le \|\Phi_n[e, r - \delta]\|_2 + \|\Phi_n[e, \delta] - \avek{f}\|_2 \nonumber \\
        \label{eq:f_bounds}
        & \le \frac{\constant{n}(\vecsigk{e})}{\sqrt{n}\vark{e}} \| \vecsigk{r} - \vecsigk{\delta}\|_2 + \frac{\constant{n}(\vecsigk{e})}{\vark{e}} \big(\vark{\fa} + \vark{\fm}\lVert\vecsigk{e}\rVert_\infty \big), 
    \end{align}
    where the first and second bounds in \eqref{eq:f_bounds} follow from \eqref{eq:boundlem2} and \eqref{eq:boundlem1}, respectively. 
    It then remains to bound the term~$\|\vecsigk{r}-\vecsigk{\delta}\|_2$ on the right-hand side of~\eqref{eq:f_bounds}. Following the definitions of the residual~$r$ in~\eqref{eq:faulttf}, and the signal~$\delta$ and the transfer function~$\tf{G}$ in~\eqref{eq:G-delta}, we have 
    \begin{align}
        r-\delta = \tf{T}\left[\fa + E(z) \fm\right]-\left(\fa+e\fm\right)\nonumber = \tf{G}\left[\fa\right]+\tf{G}\left[e\fm\right].\nonumber
    \end{align}
    Note that by construction the transfer function~$\tf{G}$ has a zero steady-state gain since the transfer function~$\tf{T}$ has $unit$ steady-state gain (see Lemma~\ref{lem:synthesis}). As such, we can apply Lemma~\ref{lem:LTI} to the right-hand side of the above relation. This leads to 
    \begin{align*}
        \lVert\vecsigk{r}-\vecsigk{\delta}\rVert_2 & \leq  \constant{1}\left(\lvert\avenk[]{\fa}{k-k_0}\rvert + \lvert\avenk[]{e\fm}{k-k_0}\rvert\right)\lvert p\rvert^{k-k_0} + \constant{2}{\sqrt{k-k_0}}\left(\varnk[]{\fa}{k-k_0} + \varnk[]{e\fm}{k-k_0}\right),\nonumber\\
        & \leq \constant{1}\left(\lvert\avenk[]{\fa}{k-k_0}\rvert + \lvert\avenk[]{e\fm}{k-k_0}\rvert\right)\lvert p \rvert^{k-k_0} +  \constant{2} {\sqrt{k-k_0}} \Big( \varnk[]{\fa}{k-k_0} \nonumber \\
        & \qquad \qquad + {\sqrt{k-k_0}}\varnk[]{\fm}{k-k_0}\varnk[]{e}{k-k_0} + \lvert\avenk[]{\fm}{k-k_0}\rvert\varnk[]{e}{k-k_0}+\lvert\avenk[]{e}{k-k_0}\rvert\varnk[]{\fm}{k-k_0} \Big),
    \end{align*}
    where in the last line we apply \eqref{eq:lem:varbound2} from Lemma~\ref{lem:varbound} to the variance of the product signals $\varnk[]{e\fm}{k-k_0}$. Substituting the above bound in~\eqref{eq:f_bounds} results in
    \begin{align*}
        \lVert\widehat{f}-\avek{f}\rVert_2 & \leq \frac{\constant{n}(\vecsigk{e})}{\sqrt{n}\vark{e}}\Big(\constant{1}\left(\lvert\avenk[]{\fa}{k-k_0}\rvert+\lvert\avenk[]{e\fm}{k-k_0}\rvert\right)\lvert p \rvert^{k-k_0}\nonumber \\ 
        & \qquad + \constant{2}{\sqrt{k-k_0}}\big(\varnk[]{\fa}{k-k_0} + {\sqrt{k-k_0}}\varnk[]{\fm}{k-k_0}\varnk[]{e}{k-k_0}\\
        & \qquad + \lvert\avenk[]{\fm}{k-k_0}\rvert\varnk[]{e}{k-k_0}+\lvert\avenk[]{e}{k-k_0}\rvert\varnk[]{\fm}{k-k_0}\big)\Big)\nonumber+\frac{\constant{n}(\vecsigk{e})}{\vark{e}}\left(\vark{\fa}+\vark{\fm}\lVert\vecsigk{e}\rVert_\infty\right).
    \end{align*}
    Finally, it suffices to factor out the right-hand side of the above inequality to the exponentially decaying term and the variance terms~$\varnk{\fa}{k-k_0}$, $\varnk{\fm}{k-k_0}$, as well as the remaining parts including $\varnk{\fa}{n}, \varnk{\fm}{n}, \varnk{e}{k-k_0}$. This concludes the desired assertion~\eqref{eq:thm:global I}.
\end{proof}

\begin{proof}[Proof of Theorem~\ref{theorem:main:I}]
The key difference between the setting of this theorem with Theorem~\ref{theorem:main:I} is the choice of pre-filter, and as such, the definition of the signal~$e$. Consider the same definitions of the transfer function~$\tf{G}$ and the signal~$\delta$ as in~\eqref{eq:G-delta} where the output of the pre-filter is defined as
\begin{align}
    e = \tf{T}\left[E(z)\right], \qquad \text{with the internal states~$X_p$.}
\end{align}
Note that the relation~\eqref{eq:f_bounds} still holds in the setting here as well. We then only need focus on the term $\lVert \vecsigk{r}-\vecsigk{\delta}\rVert_2$. In the rest of the proof, we fix the time instant~$k \in \N$ and define the average of the multiplicative fault~$\fm$ over the horizon $[k-n, k]$ as the constant denoted by
\begin{align}\label{f_bar}
    \overline{\fm} \Let \aven{\fm}{n}. 
\end{align}
Let us emphasize that we view the value $\overline{\fm}$ as constant over the entire time horizon prior to $k$; see also the definitions~\eqref{eq:a-bar} and the following paragraph there. We further introduce the shorthand notation of the step function
\begin{align*}
    \step{k_0}(k) \Let \left\{
        \begin{array}{ll}
              0\quad k< k_0\\
              1\quad k\geq k_0 \,.
        \end{array}\right.
\end{align*}

Following the above definitions, the signal~$r - \delta$ can be rewritten as
\begin{subequations}
\label{eq:r-d}
\begin{align}
        \nonumber 
        r - \delta & = \tf{T}\left[\fa + E(z)\fm\right] - \left(\fa+\tf{T}\left[E(z)\right]\fm\right) = \tf{G} \left[\fa\right] + \tf{T}\left[E(z)\fm\right] - \tf{T}\left[E(z) \right]\fm\\
        \nonumber 
        & = \tf{G}\left[\fa\right]+\tf{T}\left[E(z)\fm - \overline{\fm}E(z)\step{k_0}\right] +  \tf{T}\left[ \overline{\fm}E(z)\step{k_0} \right] - \tf{T}\left[E(z)\right]\fm \\
        \label{eq:r-d:2}
        & = \tf{G}\left[\fa\right]+\tf{T}\left[E(z)(\fm - \overline{\fm}\step{k_0})\right] +  \overline{\fm}\tf{T}\left[E(z)(\step{k_0} - 1) \right]\step{k_0} - \tf{T}\left[E(z)\right](\fm -\overline{\fm}\step{k_0})\\
        \label{eq:r-d:3} 
        & = \tf{G}\left[\fa\right] + \tf{G}\left[E(z)(\fm - \overline{\fm}\step{k_0})\right] +  \overline{\fm}\tf{T}\left[E(z)(\step{k_0} - 1) \right]\step{k_0} - \tf{G}\left[E(z)\right](\fm -\overline{\fm}\step{k_0}),
    \end{align}
    \end{subequations} 
where in \eqref{eq:r-d:2} we take the constant~$\overline{\fm}$ outside the linear transfer function~$\tf{T}$. We also note that to arrive at~\eqref{eq:r-d:2} we use the fact that~$\tf{T}\left[E(z)\step{k_0}\right](k) = 0$ for all time instants $k < k_0$, i.e., $\tf{T}\left[E(z)\step{k_0}\right] = \tf{T}\left[E(z)\step{k_0}\right]\step{k_0}$. Recall that $\tf{G} = \tf{T} - \tf{I}$ is a stable transfer function with zero steady-state gain. Also, note that for $k \ge k_0$ the third term~$\tf{T}\left[E(z)(\step{k_0} - 1) \right]$ in~\eqref{eq:r-d:3} is in fact the contribution of the internal states~$X_p(k_0)$ of the transfer function~$\tf{T}$ when the input signal is~$E(z)$ ($\step{k_0} - 1 = 0$ for all $k \ge 0$). Therefore, we can apply Lemma~\ref{lem:LTI} to each term on the right-hand side in~\eqref{eq:r-d:3} and obtain the bound
\begin{align}
        \lVert \vecsigk{r} - \vecsigk{\delta} \rVert_2 
        & \leq \constant{1}\lvert\overline{\fa}\rvert|p|^{k-k_0}+\constant{2}\sqrt{k-k_0}\varnk[]{\fa}{k-k_0} \label{eq:varproduct}\\
        & \qquad + \constant{1}\lvert\avenk[]{E(z)(\fm - \overline{\fm}\step{k_0})}{k-k_0}\rvert|p|^{k-k_0}+\constant{2}\sqrt{k-k_0}\varnk[]{E(z)(\fm - \overline{\fm}\step{k_0})}{k-k_0} \nonumber \\
        & \qquad + \lvert\widebar{f_m}\rvert\constant{0}\lVert X_p(k_0)\rVert_2|p|^{k-k_0} \nonumber\\
        & \qquad + \lVert \vecsigk{e}-E(\vecsigk{z})\rVert_\infty\sqrt{n}\varnk[]{\fm}{n}\nonumber
\end{align}
We first note that in \eqref{eq:varproduct} we can simplify the first term of the second line as $\avenk[]{E(z)(\fm - \overline{\fm}\step{k_0})}{k-k_0}=\avenk[]{E(z)\fm }{k-k_0}-\avenk[]{E(z)}{k-k_0}\overline{\fm}$. We further borrow the results of Lemma~\ref{lem:varbound} to bound the terms involving the product of two signals in~\eqref{eq:varproduct}. More specifically, we have
\begin{align}
    \varnk[]{E(z)(\fm-\overline{\fm}\step{k-k_0})}{k-k_0} & \le \sqrt{k-k_0}\varnk[]{E(z)}{k-k_0}\varnk[]{\fm}{k-k_0}\nonumber\\
    &\qquad + \lvert\avenk[]{\fm - \overline{\fm}\step{k_0}}{k-k_0}\rvert\varnk[]{E(z)}{k-k_0}\nonumber + \lvert\avenk[]{E(z)}{k-k_0}\rvert\varnk[]{\fm}{k-k_0} \\
    & = \sqrt{k-k_0}\varnk[]{E(z)}{k-k_0}\varnk[]{\fm}{k-k_0} \label{eq:products}\\
    &\qquad + \lvert\avenk[]{\fm}{k-k_0} - \overline{\fm}\step{k_0}\rvert\varnk[]{E(z)}{k-k_0} + \lvert\avenk[]{E(z)}{k-k_0}\rvert\varnk[]{\fm}{k-k_0} \nonumber. 
\end{align}
It now suffices to substitute the upper bounds~\eqref{eq:products} in \eqref{eq:varproduct}, and then invoke the resulting bound on~$\lVert \vecsigk{r} - \vecsigk{\delta} \rVert_2$ in~\eqref{eq:f_bounds}. Finally, it remains to factor out the right-hand side of the inequality to the exponentially decaying term, the variance terms~$\varnk{\fa}{k-k_0}$, $\varnk{\fm}{k-k_0}$, as well as the remaining parts including $\varnk{\fa}{n}$, $\varnk{\fm}{n}$, $\varnk{E(z)}{k-k_0}$. This concludes the desired assertion~\eqref{eq:thm:global II}.
\end{proof}


\section{Case Study: Lateral Control of Autonomous Vehicles}\label{sec:casestudy}
\begin{wrapfigure}{R}{0.4\textwidth}
\centering
\includegraphics[clip, trim=6cm 13.1cm 0cm 7.2cm, scale=0.7]{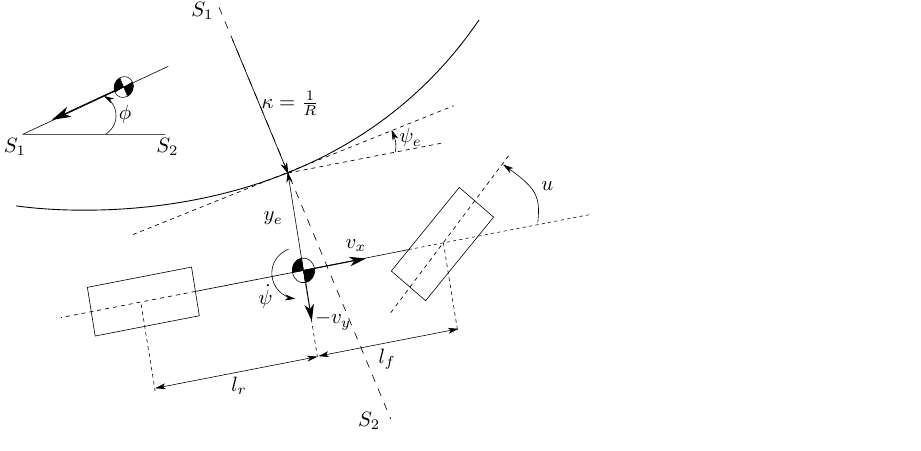}
\caption{Visual representation of the bicycle model.}
\label{fig:simmodel}
\end{wrapfigure}
In this section, the presented theory is illustrated using a fault isolation problem in the scope of the lateral control of autonomous vehicles. In this scope, the linear single-track vehicle model is used as a benchmark example~\cite[Equation 1]{Schmeitz2017}. The model represents a linearization of the lateral dynamics of an automated vehicle. As more driver tasks are alleviated in the context of automated driving, the responsibility of passenger safety is expected to shift towards the automated driving software. In this context, fault detection and isolation are increasingly important in the automotive industry. 
Namely, robustness for failures from both known and unknown sources remains a challenging topic in this domain and requires extensive research and rigorous experimental evaluation~\cite{GUANETTI201818}. The outline of this section is as follows: first, a model description is given in Section~\ref{sec:synthesis}, simulation results will be shown and a conclusion will be drawn regarding filter performance and bound analysis in Section~\ref{sec:simresults}. Finally, a sensitivity analysis is performed to explore the degrees of freedom in the filter design in Section~\ref{sec:sensitivity}.

\subsection{Problem description}\label{sec:synthesis}
The proposed linear bicycle model is depicted in Figure~\ref{fig:simmodel}. Here, the state of the vehicle is chosen as $X=[v_y,\:\dot{\psi},\:y_e,\:\psi_e]\tr$ where $v_y$ represents the lateral velocity, $\dot{\psi}$ represents the yaw-rate, $y_e$ represents the lateral error from the lane centre and $\psi_e$ represents the heading error from the lane centre. The disturbance vector is chosen as $d=\left[\sin(\phi),\:\kappa\right]$ where $\phi$ represents the banking angle of the road (as depicted by cross-section $S_1-S_2$ in Figure~\ref{fig:simmodel}) and $\kappa$ represents the curvature of the road. The input $u$ represents the steering wheel angle of the front wheels of the vehicle. In this case study, we consider additive and multiplicative faults acting on the steering input signal $u$. These faults could realistically occur as an offset in the steering column, $\fa$, or a loss of actuator efficiency, $\fm$. These faults may result in unexpected transient and steady-state tracking errors and hence result in dangerous situations for the vehicle passengers if not detected and handled properly. The model description with its states, disturbances, faults and input can be written in the form of a continuous-time, linear differential equation as follows:
\begin{align}\label{eq:statespaceODE}
\left\{\begin{array}{lc}
\dot{X}(t)=\bar{A}X(t)+\bar{B}_uu(t)+\bar{B}_f({\fm(t)u(t)+\fa(t)})+\bar{B}_dd(t),\\Y(t)=CX,\end{array}\right.
\end{align}

where the system matrices $\bar{A},\:\bar{B}_u,\:\bar{B}_d\:,C$ can be written as follows~\cite[Equation 1]{Schmeitz2017}:
    \begin{align*}
    \footnotesize
    \setlength{\arraycolsep}{2.5pt}
    \medmuskip = 1mu 
        \bar{A} =  \begin{bmatrix}\frac{C_f+C_r}{v_xm}&\frac{l_fC_f-l_rC_r}{v_xm}&0&0\\
    \frac{l_fC_f-l_rC_r}{v_xI}&\frac{l_f^2C_f+l_r^2C_r}{v_xI}&0&0\\-1&0&0&v_x\\0&-1&0&0\end{bmatrix}, \quad 
        \bar{B}_u = \begin{bmatrix}-\frac{C_f}{m}\\-\frac{l_fC_f}{I}\\0\\0\end{bmatrix}, \quad 
        \bar{B_d} = \begin{bmatrix}g&0\\0&0\\0&0\\0&v_x\end{bmatrix},\quad C = \begin{bmatrix}0&1&0&0\\0&0&1&0\\0&0&0&1\end{bmatrix},
    \end{align*}
where $C_f=1.50\cdot 10^5\:\rm N\cdot rad^{-1}$ and $C_r=1.10\cdot 10^5\:\rm N\cdot rad^{-1}$ represent the lateral cornering stiffnesses of the front and rear tyres, respectively, $l_f=1.3\:\rm m$ and $l_r=1.7\:\rm m$ represent the distances from the front and rear axle to the center of gravity, respectively, $v_x=19\:\rm m\cdot s^{-1}$ represents the longitudinal velocity, $m=1500\:\rm kg$ represents the total mass of the vehicle, $I=2600\:\rm kg\cdot m^2$ represents the moment of inertia around the vertical axis of the vehicle and finally $g=9.81\:\rm m\cdot s^{-2}$ represents the gravitational acceleration.     
To fit the discrete-time model setting employed in this paper, we first exactly discretize the dynamical system using a classical control theoretical result~{\cite[Section 2]{Yamamoto1996166}}. The resulting discrete-time state-space matrices can be written as follows:
\begin{align}
	\left\{\begin{array}{lc}
    	A=e^{\bar{A}h},&B_u=\int_{0}^he^{\bar{A}s}\bar{B_u}ds,\\B_d = \int_{0}^he^{\bar{A}s}\bar{B}_dds,&B_f = \int_{0}^he^{\bar{A}s}\bar{B}_fds
    	,\label{eq:matrices}
	\end{array}\right.
\end{align}
where $h$ is the sampling interval and is chosen as $h=0.01s$ and $f=\fm u+\fa$ represents the aggregated fault signal. Note, that the discretized system matrices~\eqref{eq:matrices} can be written in the DAE framework by virtue of Fact~\ref{fact:odedae} when setting $K_X=0$ and $K_Y=0$.

\subsection{Simulation results}\label{sec:simresults}
For the synthesis of the fault detection filter, the degree of the filter $N(\shiftop)$ is set to $d_{N}=3$. The denominator $a(\shiftop)$ of the fault detection filter is set to $a(\shiftop)=(\shiftop+0.85)(\shiftop+0.59)(\shiftop+0.58)$. Note, that distinct poles are chosen to be able to verify the performance bound of the estimation error using the results of Lemma~\ref{lem:LTI}. The filter $N(\shiftop)$ can be found by solving the linear program in~\eqref{eq:recastlemma}. For synthesis of the fault isolation filter, the time horizon $n$ is initially chosen as $n=10$. This forms a basis for the fault diagnosis filter.\\
The input signal $u$ to the system dynamics is selected to be a sinusoidal signal with an amplitude of $2.3\cdot 10^{-3}$ radians at a frequency of $0.3$Hz. The frequency content of the input signal is inspired by experimental data of an automated vehicle, driving in-lane using a PD-type controller, while being excited by natural disturbances (e.g., profile of the road). For this simulation study, the additive fault and the multiplicative fault are selected as {\em incipient} fault functions
\begin{align*}
    \fa(k)\Let\left\{\begin{array}{@{}c}0\\2.5\cdot 10^{-4}\frac{\pi}{180}k\\0.1\frac{\pi}{180}\end{array}\begin{array}{@{}c}0\leq k<850\\850\leq k< 1250\\k\geq1250\end{array}\right.,\qquad\fm(k)\Let\left\{\begin{array}{c}-\frac{0.05}{100}k\quad\\-0.2\end{array}\begin{array}{c}0\leq k<400\\k\geq400\end{array}\right..
\end{align*}
These fault functions are chosen to realistically correspond to an experimental setting. The additive fault $\fa$ usually appears as a low magnitude fault as it could be caused by a small mechanical misalignment in the steering rack. The multiplicative fault $\fm$ is chosen to represent a degradation of the efficiency of the steering actuator. This is a critical case in automated driving as a continuously degrading actuator could result in a decision to park towards the side of the road. The practical value of the faults and the system described shows the particular value of this simulation setup. Using the described model and its inputs, the two faults are injected and the estimation error and performance bounds from Theorem~\ref{theorem:main:I} and~\ref{theorem:main:II} are simulated/evaluated as shown in Figure~\ref{fig:reso1}.\\
\begin{figure}
     \centering
     	\subfloat[Additive fault $\fa$]{\includegraphics[scale=0.8, trim={2cm 12.5cm 0.9cm 12.5cm}]{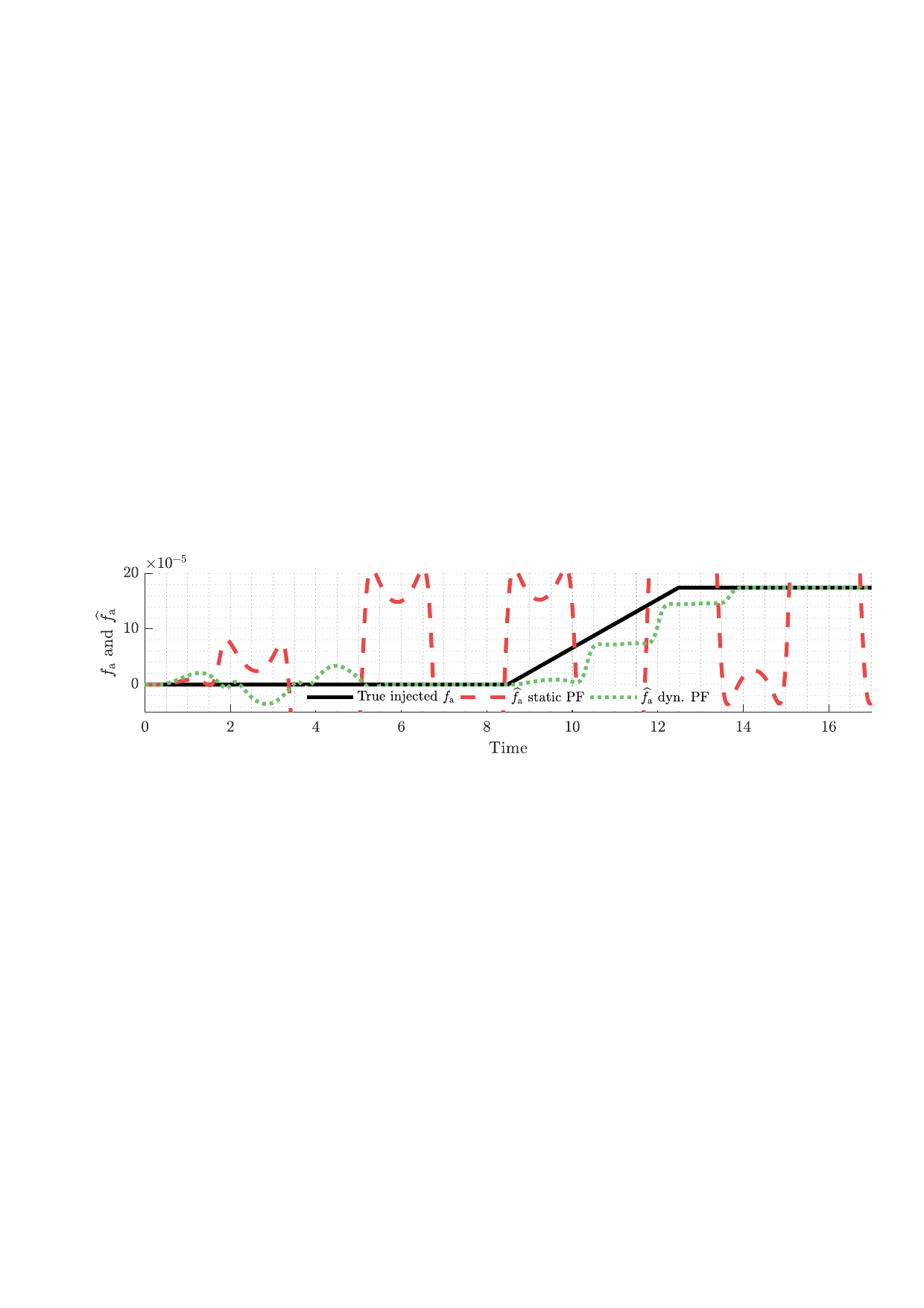}\label{fig:nlprefcomp2}}\\    
     	\subfloat[Multiplicative fault $\fm$]{\includegraphics[scale=0.8, trim={2cm 12.5cm 0.9cm 12.5cm}]{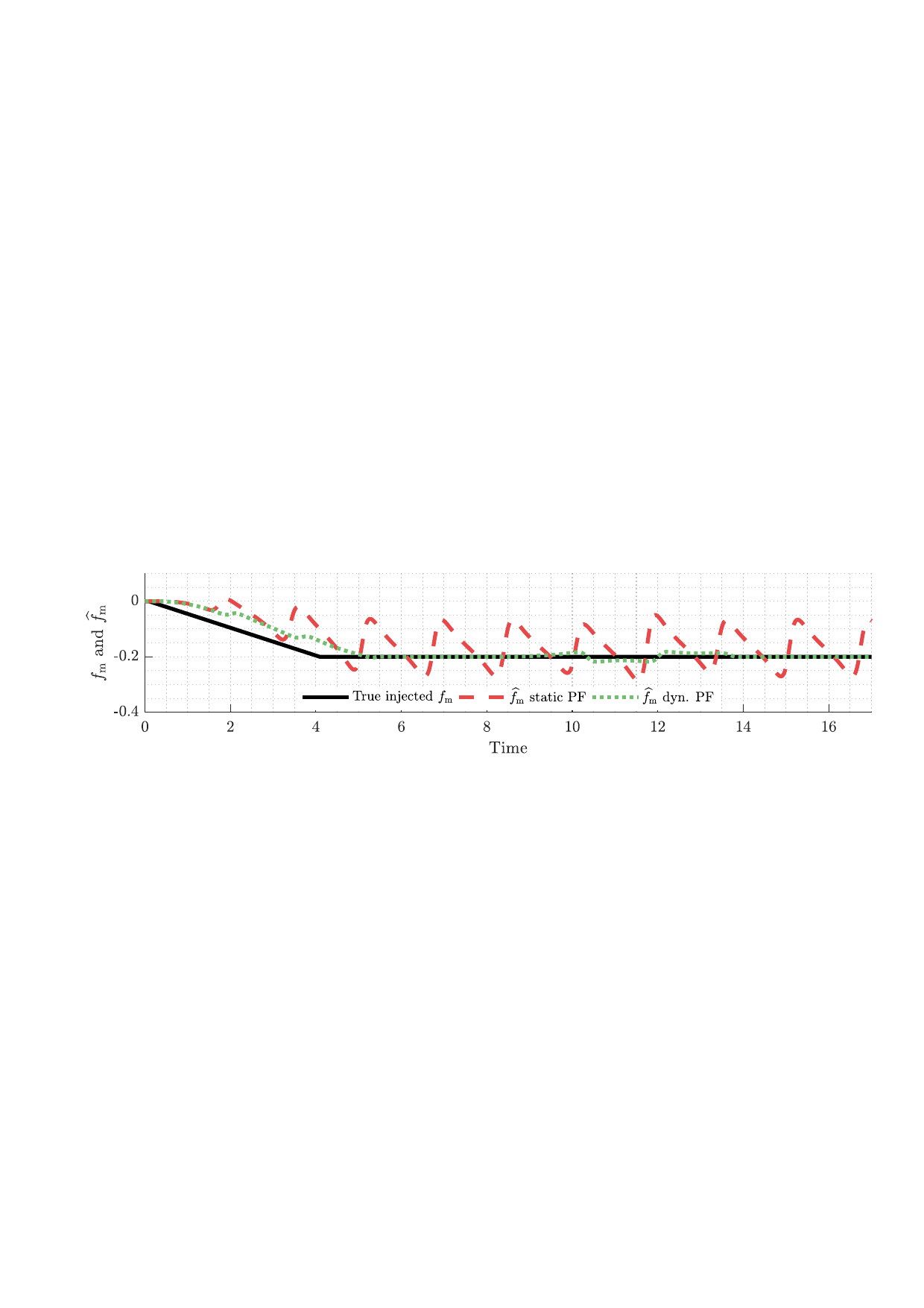}\label{fig:nlprefcomp3}}\\
     	\subfloat[True estimation error in comparison with the corresponding performance bounds in~\eqref{eq:cor:perf I} and~\eqref{eq:cor:perf II} for the static and dynamic pre-filters, respectively.]{\includegraphics[scale=0.8, trim={2cm 12.5cm 0.9cm 12.5cm}]{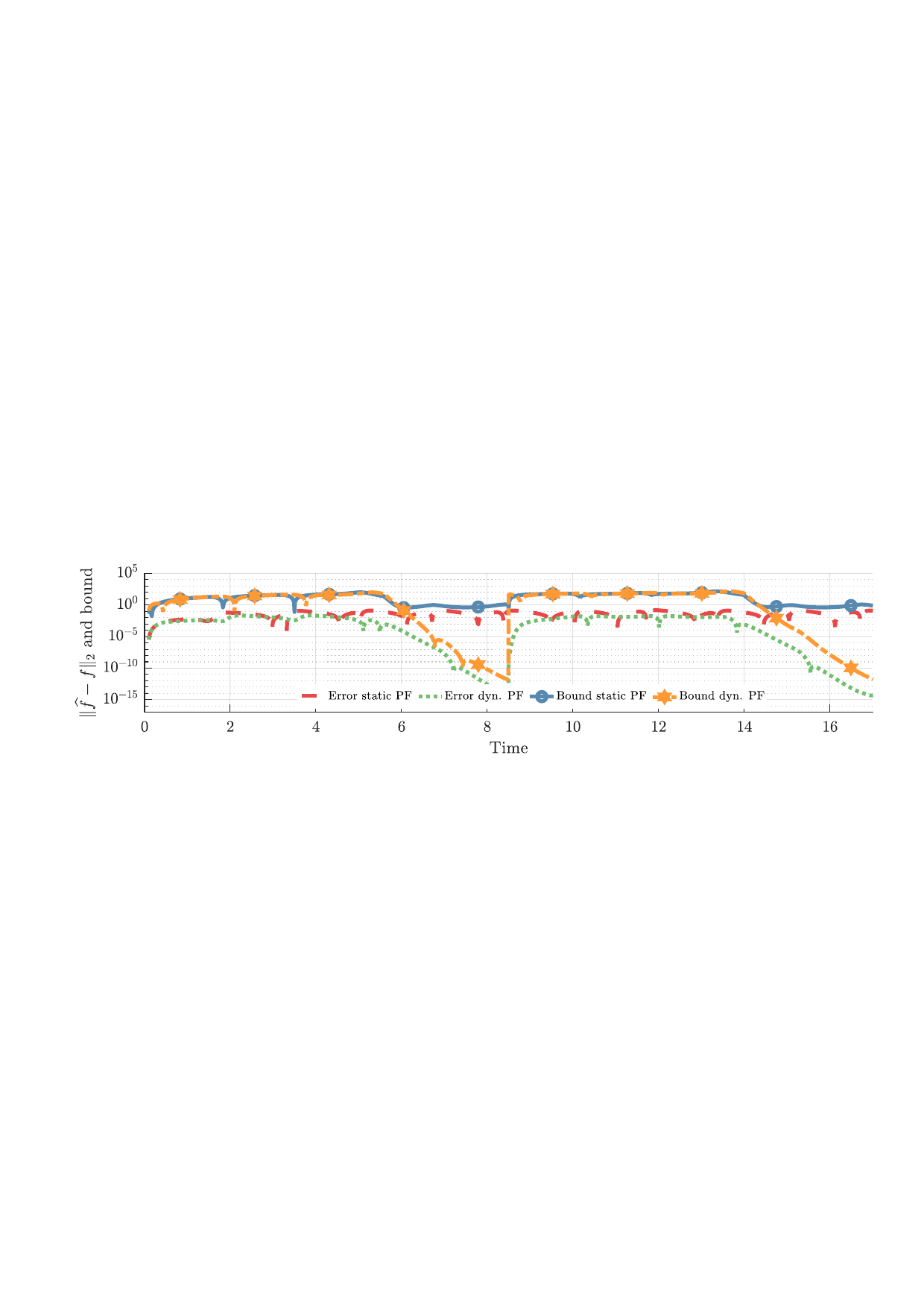}\label{fig:nlprefcomp4}}
     \caption{True estimation errors and the corresponding performance bounds in the presence of constant faults as discussed in Corollaries~\ref{cor:constant faults:I} and~\ref{cor:constant faults:II} (PF: Pre-Filter).}
     \label{fig:reso1}
\end{figure}

Figure~\ref{fig:nlprefcomp2} and~\ref{fig:nlprefcomp3} show the injected faults $\fa$ and $\fm$ as well as the estimated faults $\hfa$ and $\hfm$ using a static and dynamic pre-filter, respectively.  As shown in Figure~\ref{fig:nlprefcomp2} and~\ref{fig:nlprefcomp3}, the  fault estimates $\hfa,\:\hfm$ from the diagnosis filter with static pre-filter (Theorem~\ref{theorem:main:II}) fluctuate heavily  around the true injected fault. This particular behavior is also reflected by the performance bound in the constant-fault case as depicted in Corollary~\ref{cor:constant faults:I}, where we find that in particular the variance of the signal $e$ (in this case study, we consider $E(z)=u$, hence $e$ is either a direct feed-through or filtered version of $u$) is a persistent contributor in the bound of the estimation error. As a matter of fact, the periodic behavior of the estimated faults is found to be oscillating with the same period as input signal $u$.\\ 
The estimated faults that result from the diagnosis filter with the dynamic pre-filter (Theorem~\ref{theorem:main:II}) show convergence to the constant true faults over time. This was to be expected, as it was shown in Corollary~\ref{cor:constant faults:II} that, in the constant fault case, the performance bound decays exponentially as time increases. Figure~\ref{fig:nlprefcomp4} depicts the estimation errors (left-hand side of~\eqref{eq:thm:perf I} and~\eqref{eq:FIfiltpf} for the static pre-filter and the dynamic pre-filter, respectively) and their simulated performance bounds (right-hand side of~\eqref{eq:thm:perf I} and~\eqref{eq:FIfiltpf} for the static pre-filter and the dynamic pre-filter, respectively). It can be observed from this figure that the estimation error and its performance bound for the static pre-filter are time-varying and non-zero, even for constant faults. This confirms our findings from Corollary~\ref{cor:constant faults:I}, where it was shown that the performance bound for constant faults always depends on~$\varnk[]{e}{k-k_0}$ which was assumed to be non-zero. The estimation error shows similar non-zero behavior, showing that the bound gives sufficient insight into the behavior of the estimation error. For the dynamic pre-filter, the performance bound and the estimation error drop towards zero (up to numerical precision) when the fault is constant and the pre-filter has converged. Once a new fault occurs, the estimation error and its performance bound increase and subsequently converge back towards zero. Translating these results back into the application perspective of automated driving shows that, both the additive and multiplicative fault, can be estimated simultaneously in a matter of seconds (depending on the tuning of the horizon $n$ and the poles of the FD filter). This allows the automated vehicle to act upon two different fault types accordingly, as opposed to having to conservatively act on the presence of an aggregated fault signal $\fa+\EZ\fm$ without knowing the separate contributions of the faults. For example, a slight offset $\fa$ in the system could be attenuated in a closed-loop, a large degradation $\fm$ however should result in a transition of the vehicle to a safe state.
\subsection{Sensitivity study}\label{sec:sensitivity}
A sensitivity design is done in this section to touch upon potential degrees of freedom that could affect the estimation error of the fault diagnosis filter. The composed bounds in~\eqref{eq:thm:global I} and~\eqref{eq:thm:global II} consists of elements that usually can not be affected in an open-loop system (i.e., the behavior of the faults $\fa,\:\fm$ and the signal $e$) and those that can be influenced (i.e., the fault detection filter dynamics and the fault isolation time-horizon $n$). In this sensitivity study, the dynamics of the detection filter, as well as the horizon $n$ of the isolation filter, are considered as "tuning" parameters. For the fault detection filter dynamics, only the dominant pole $p$ is considered as a tuning parameter for the sake of simplicity.
\begin{figure}[!t]
     \subfloat[$n=10$, $p=0.6$]{\includegraphics[scale=0.9, clip]{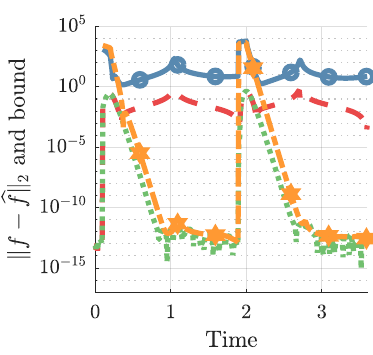}\label{fig:reso_log21}}
     \subfloat[$n=10$, $p=0.85$]{\includegraphics[scale=0.9, clip]{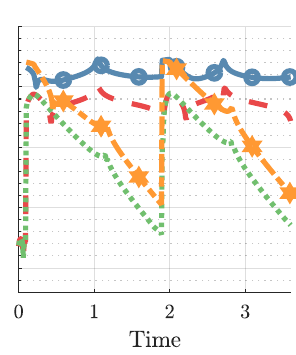}\label{fig:reso_log22}}
     \subfloat[$n=80$, $p=0.85$]{\includegraphics[scale=0.9, clip]{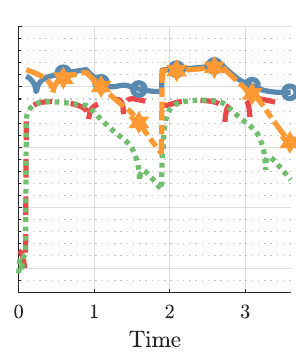}\label{fig:reso_log23}}\\
     \centering
     \includegraphics[trim={0cm 0.4cm 0.9cm 0cm}, scale=0.9]{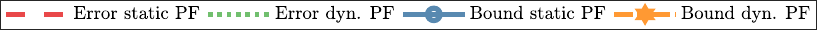}
     \caption{Sensitivity of true estimation errors and the corresponding performance bounds proposed by Corollaries~\ref{cor:constant faults:I} and~\ref{cor:constant faults:II} concerning the dominant pole~$p$ and the regression horizon~$n$ (PF:Pre-Filter).}
     \label{fig:reso_log2}
\end{figure}
Figure~\ref{fig:reso_log2} provides the results of the sensitivity analysis. In Figure~\ref{fig:reso_log22}, the baseline simulation that was shown in Figure~\ref{fig:nlprefcomp4} is provided as means of comparison. In Figure~\ref{fig:reso_log21}, the dominant pole location is shifted from $p=0.85$ to $p=0.6$. From the composed performance bounds~\eqref{eq:thm:global I} and~\eqref{eq:thm:global II} it is clear that, for the performance bound, this strengthens the exponential decay of the average behavior of the faults which can, in particular, be observed in the convergence behavior of the dynamic pre-filter estimation and bound. When increasing the time-horizon $n$ from $n=10$ to $n=80$ it is clearly observed in Figure~\ref{fig:reso_log23} that the bound is pushed down and, as a result, the estimation error is also pushed down. This was to be expected, as the horizon $n$ forms a common denominator for a large part of both performance bounds. However, the exponential decay of the average behavior is slower due to its influence on the average behavior in~\eqref{eq:thm:global I} and~\eqref{eq:thm:global II}. There are more, less trivial, phenomena caused by a change in parameters $n$ and $p$. 
The effects of these phenomena will not be investigated in this analysis as the effects are highly dependent on the signal properties of signals $\fa,\:\fm$ and $e$ and it is therefore difficult to assess generically.
\section{Conclusion and future directions}\label{sec:conclusion}
In this work, a fault diagnosis architecture for the detection, isolation and estimation of additive and multiplicative faults is presented. System-theoretic bounds have been developed for the regression operator. These bounds, including several preliminary lemmas, have proven to be valuable building blocks for the composition of guaranteed performance bounds for the detection, isolation, and estimation architecture for two particular variants of the pre-filter block. The insights gained from the first pre-filter variant (i.e., the identity block) and its behavior for constant faults have been used for the design of a second variant (i.e., the dynamic pre-filter) which has been proven to have an asymptotically decaying bound for constant faults. Simulation results in the domain of SAE level 4 automated driving show the practical value and the potential of the proposed approach in relevant future-proof applications.

A future research direction is the closed-loop implementation of the proposed diagnosis filter. We envision that this could be a stepping-stone toward a class of fault-tolerant control systems that enjoys rigorous stability and performance guarantees. Another interesting direction is the application of the proposed diagnosis architecture in an active fault diagnosis context. This could also build on the key insights obtained from the performance bounds with a particular focus on the role of signals characteristics. Moreover, it is also worth noting that a probabilistic treatment towards the effects of measurements noise (e.g., the $\mathcal{H}_2$ gain of the noise to residual as in~\cite{Svetozarevic20134453}) could also be another direction.

    \bibliographystyle{plain}
    \bibliography{library/main_TN_revision}

\begin{thebibliography}{10}

\bibitem{Ansari201911}
A.~Ansari and D.S. Bernstein.
\newblock Deadbeat unknown-input state estimation and input reconstruction for
  linear discrete-time systems.
\newblock {\em Automatica}, 103:11--19, 2019.

\bibitem{126576}
{B.} Bamieh and J.~Boyd~Pearson.
\newblock A general framework for linear periodic systems with applications to
  $\mathcal{H}_\infty$ sampled-data control.
\newblock {\em IEEE Transactions on Automatic Control}, 37(4):418--435, 1992.

\bibitem{beard1971failure}
R.V. Beard.
\newblock {\em Failure Accommodation in Linear Systems Through
  Self-reorganization}.
\newblock NASA CR. M.I.T. Man-Vehicle Laboratory, 1971.

\bibitem{7428855}
F.~Boem, R.M.G. Ferrari, C.~Keliris, T.~Parisini, and M.M. Polycarpou.
\newblock A distributed networked approach for fault detection of large-scale
  systems.
\newblock {\em IEEE Trans. Autom. Control}, 62(1):18--33, 2017.

\bibitem{Boem20194}
F.~Boem, S.~Riverso, G.~Ferrari-Trecate, and T.~Parisini.
\newblock Plug-and-play fault detection and isolation for large-scale nonlinear
  systems with stochastic uncertainties.
\newblock {\em IEEE Trans. Autom. Control}, 64(1):4--19, 2019.

\bibitem{Chen20202107}
L.~Chen, S.~Fu, Y.~Zhao, M.~Liu, and J.~Qiu.
\newblock State and fault observer design for switched systems via an adaptive
  fuzzy approach.
\newblock {\em IEEE Transactions on Fuzzy Systems}, 28(9):2107--2118, 2020.

\bibitem{DU20141066}
M.~Du and P.~Mhaskar.
\newblock Isolation and handling of sensor faults in nonlinear systems.
\newblock {\em Automatica}, 50(4):1066 -- 1074, 2014.

\bibitem{GERTLER1997653}
J.~Gertler.
\newblock Fault detection and isolation using parity relations.
\newblock {\em Control Engineering Practice}, 5(5):653 -- 661, 1997.

\bibitem{GUANETTI201818}
J.~Guanetti, Y.~Kim, and F.~Borrelli.
\newblock Control of connected and automated vehicles: State of the art and
  future challenges.
\newblock {\em Annual Reviews in Control}, 45:18 -- 40, 2018.

\bibitem{HOFLING19961361}
T.~H\"{o}fling and R.~Isermann.
\newblock Fault detection based on adaptive parity equations and
  single-parameter tracking.
\newblock {\em Control Engineering Practice}, 4(10):1361--1369, 1996.

\bibitem{KELIRIS2015408}
C.~Keliris, M.M. Polycarpou, and T.~Parisini.
\newblock A robust nonlinear observer-based approach for distributed fault
  detection of input–output interconnected systems.
\newblock {\em Automatica}, 53:408--415, 2015.

\bibitem{LAN201648}
J.~Lan and R.J. Patton.
\newblock A new strategy for integration of fault estimation within
  fault-tolerant control.
\newblock {\em Automatica}, 69:48--59, 2016.

\bibitem{LIU20193849}
Y.~Liu, X.~Dong, Z.~Ren, and J.~Cooper.
\newblock Fault-tolerant control for commercial aircraft with actuator faults
  and constraints.
\newblock {\em Journal of the Franklin Institute}, 356(7):3849--3868, 2019.

\bibitem{Esfahani2016}
P.~{Mohajerin Esfahani} and J.~Lygeros.
\newblock {A tractable fault detection and isolation approach for nonlinear
  systems with probabilistic performance}.
\newblock {\em IEEE Transactions on Automatic Control}, 61(3):633--647, 2016.

\bibitem{2006Nyberg}
M.~{Nyberg} and E.~{Frisk}.
\newblock Residual generation for fault diagnosis of systems described by
  linear differential-algebraic equations.
\newblock {\em IEEE Transactions on Automatic Control}, 51(12):1995--2000,
  2006.

\bibitem{8846711}
K.~{Pan}, P.~{Palensky}, and P.~{Mohajerin Esfahani}.
\newblock From static to dynamic anomaly detection with application to power
  system cyber security.
\newblock {\em IEEE Transactions on Power Systems}, 35(2):1584--1596, 2020.

\bibitem{Schmeitz2017}
A.~Schmeitz, J.~Zegers, J.~Ploeg, and M.~Alirezaei.
\newblock Towards a generic lateral control concept for cooperative automated
  driving theoretical and experimental evaluation.
\newblock In {\em IEEE Int. Conf. on Models and Technologies for Intelligent
  Transportation Systems}, pages 134--139, 2017.

\bibitem{Shames20112757}
I.~Shames, A.M.H. Teixeira, H.~Sandberg, and K.H. Johansson.
\newblock Distributed fault detection for interconnected second-order systems.
\newblock {\em Automatica}, 47(12):2757--2764, 2011.

\bibitem{Svetozarevic20134453}
B.~Svetozarevic, P.M. Esfahani, M.~Kamgarpour, and J.~Lygeros.
\newblock A robust fault detection and isolation filter for a horizontal axis
  variable speed wind turbine.
\newblock {\em Proceedings of the American Control Conference}, pages
  4453--4458, 2013.

\bibitem{TAO201933}
Y.~Tao, H.~Shi, B.~Song, and S.~Tan.
\newblock Parallel quality-related dynamic principal component regression
  method for chemical process monitoring.
\newblock {\em Journal of Process Control}, 73:33--45, 2019.

\bibitem{vanderploeg2020multiple}
C.J. {van der Ploeg}, M.~{Alirezaei}, N.~{van de Wouw}, and P.~{Mohajerin
  Esfahani}.
\newblock Multiple faults estimation in dynamical systems: Tractable design and
  performance bounds.
\newblock {\em arXiv preprint arXiv:2011.13730}, 2020.

\bibitem{VanGestel20045}
T.~Van~Gestel, J.A.K. Suykens, B.~Baesens, S.~Viaene, J.~Vanthienen, G.~Dedene,
  B.~De~Moor, and J.~Vandewalle.
\newblock Benchmarking least squares support vector machine classifiers.
\newblock {\em Machine Learning}, 54(1):5--32, 2004.

\bibitem{WILLEMS2005325}
J.C. Willems, P.~Rapisarda, I.~Markovsky, and B.~{De Moor}.
\newblock A note on persistency of excitation.
\newblock {\em Sys. \& Cont. Letters}, 54(4):325 -- 329, 2005.

\bibitem{Yamamoto1996166}
Y.~Yamamoto and P.P. Khargonekar.
\newblock Frequency response of sampled-data systems.
\newblock {\em IEEE Transactions on Automatic Control}, 41(2):166--176, 1996.

\bibitem{8370129}
M.~{Yu}, C.~{Xiao}, W.~{Jiang}, S.~{Yang}, and H.~{Wang}.
\newblock Fault diagnosis for electromechanical system via extended analytical
  redundancy relations.
\newblock {\em IEEE Transactions on Industrial Informatics}, 14(12):5233--5244,
  2018.

\bibitem{Zhan20182146}
T.~Zhan, J.~Tian, and S.~Ma.
\newblock Full-order and reduced-order observer design for one-sided lipschitz
  nonlinear fractional order systems with unknown input.
\newblock {\em International Journal of Control, Automation and Systems},
  16(5):2146--2156, 2018.

\end{thebibliography}
\end{document}